\pgfplotsset{width=7cm}
\theoremstyle{plain}
\newtheorem{theorem}{Theorem}
\newtheorem{proposition}{Proposition}
\newtheorem{lemma}{Lemma}
\newtheorem{corollary}{Corollary}
\theoremstyle{definition}
\newtheorem{conjecture}{Conjecture}
\newtheorem{remark}{Remark}
\def\G{\mathcal{G}}
\def\P{\mathcal{P}}
\def\N{\mathcal{N}}
\def\A{\mathcal{A}}
\def\B{\mathcal{B}}
\begin{document}

\title{Three-pile Sharing Nim and the quadratic time winning strategy}

\author{Nhan Bao Ho}
\address{Department of Mathematics, La Trobe University, Melbourne, Australia 3086}
\email{nhan.ho@latrobe.edu.au, nhanbaoho@gmail.com}

%

\subjclass[2000]{Primary: 91A46}
\keywords{combinatorial games, {\sc Nim}, {\sc Sharing Nim}, quadratic time, nim-sequence, periodicity}

\thanks{}

\begin{abstract}
We study a variant of 3-pile {\sc Nim} in which a move consists of taking tokens from one pile and, instead of removing then,
topping up on a smaller pile provided that the destination pile does not have more tokens then the source pile after the move.
We discover a situation in which each column of two-dimensional array of Sprague-Grundy values is a palindrome.
We establish a formula for $\P$-positions by which winning moves can be computed in quadratic time.
We prove a formula for positions whose Sprague-Grundy values are $1$ and estimate the distribution of those positions whose nim-values are $g$.
We discuss the periodicity of nim-sequences that seem to be bounded.
\end{abstract}

\maketitle

\section{Introduction} \label{S.int}


In a supermarket, it is the usual practice that bottles of milk with a shorter expiration date are placed in the front (the customer side) of shelves and those with a longer expiration date
are placed at the back (the store side).  One day, two staff challenge each other with the following game. There  are a different number of bottles with a short expiry date in rows on the shelf (Figure \ref{F.Milk} (a)).  The staff need to make sure that there is roughly an equal number of bottles in each row to  increase the likelihood that customers will take the older milk in the front rather than the fresher milk which will be placed at the back of the shelf.  The two staff take turns to move a bottle from a row which has a larger number of bottles and move it to a row which has a smaller number of bottles, such that the size of the destination row does not exceed that of the source row.  The staff member who makes the final move, that is, when the difference between the row with the highest number of bottles and the row with the lowest number of bottles is at most one,  wins. Figure \ref{F.Milk} (b) gives a possible ending state of the game in Figure \ref{F.Milk} (a). We will analyze this game in this paper.

\begin{figure}[ht]
\begin{center}
\begin{tikzpicture}

\draw[step=.5cm, gray,very thin] (0,0) grid (2,5);

\shade[ball color=black] (.25,.25) circle (.2cm);
\shade[ball color=black] (.25,.75) circle (.2cm);
\shade[ball color=black] (.25,1.25) circle (.2cm);

\shade[ball color=black] (.75,.25) circle (.2cm);
\shade[ball color=black] (.75,.75) circle (.2cm);

\shade[ball color=black] (1.25,.25) circle (.2cm);
\shade[ball color=black] (1.25,.75) circle (.2cm);
\shade[ball color=black] (1.25,1.25) circle (.2cm);
\shade[ball color=black] (1.25,1.75) circle (.2cm);
\shade[ball color=black] (1.25,2.25) circle (.2cm);

\shade[ball color=black] (1.75,.25) circle (.2cm);

\node at (3,3) {$\rightarrow$};

\draw[step=.5cm, gray,very thin] (4,0) grid (6,5);
\draw [gray,very thin] (4,0) -- (4,5);
\shade[ball color=black] (4.25,.25) circle (.2cm);
\shade[ball color=black] (4.25,.75) circle (.2cm);
\shade[ball color=black] (4.25,1.25) circle (.2cm);

\shade[ball color=black] (4.75,.25) circle (.2cm);
\shade[ball color=black] (4.75,.75) circle (.2cm);

\shade[ball color=black] (5.25,.25) circle (.2cm);
\shade[ball color=black] (5.25,.75) circle (.2cm);
\shade[ball color=black] (5.25,1.25) circle (.2cm);

\shade[ball color=black] (5.75,.25) circle (.2cm);
\shade[ball color=black] (5.75,.75) circle (.2cm);
\shade[ball color=black] (5.75,1.25) circle (.2cm);

\node at (1,5.5) {store side};
\node at (1,-.5) {customer side};

\node at (1,-1.5) {(a): before sorting};
\node at (5,-1.5) {(b): after sorting};

\end{tikzpicture}
\caption{A beginning and a possible ending state of the game.} \label{F.Milk}
\end{center}
\end{figure}
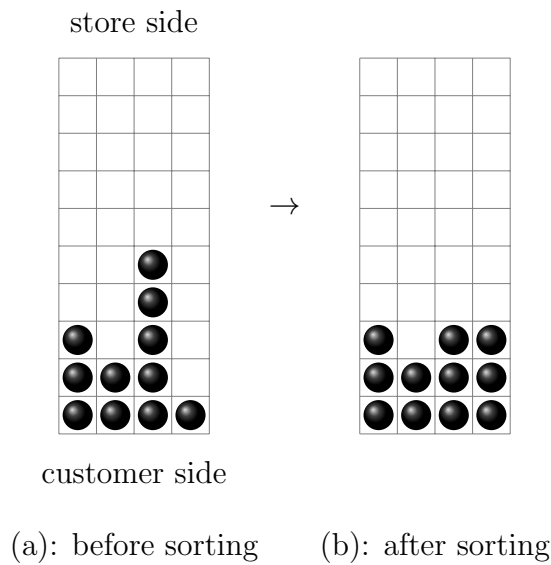

A combinatorial game is played by two players in which the players move alternately, following some given rules of moves. The players are aware of the situation of the game after each move. There is no element of luck (for example, dealing cards). In the normal convention, the player who cannot find a legal move in her turn loses. A game is short if it has finitely many positions and each position can be visited no more than one time. A game is impartial if the two players have the same legal moves from every position. In this paper, games are short and impartial. The reader can find a comprehensive theory of combinatorial games, especially impartial games, in \cite{ww, Con01}.

The game of {\sc Nim}, analyzed by Bouton \cite{Bouton}, provides a typical example of combinatorial games. {\sc Nim} is played with a finite row of piles of tokens. A move consists of choosing one pile and removing an arbitrary number of tokens from that pile.

Several variants of {\sc Nim} have been studied in the literature. For examples, in the game of {\sc End-Nim} \cite{Alb01}, from a row of piles of tokens, a move can remove tokens from only either of the end piles. This is an example of restriction of {\sc Nim}. In {\sc Wythoff} \cite{Wyt}, played with two piles, one can remove tokens from one pile as in {\sc Nim} or remove an equal number of tokens from both piles. {\sc Wythoff} is an example of extension of {\sc Nim}.

In this paper, we examine a 3-pile variant of {\sc Nim} in which the players move tokens from one pile to some other without reducing the total of tokens. The rule of game is as follows. There are three piles of tokens. In her turn, the player takes some tokens from one pile and adds them to one of other piles provided that after the move the receiving pile does not have more tokens than the source pile. The player who makes the last move wins. We call this game {\sc Sharing Nim}.

By definition there are three types of moves for {\sc Sharing Nim} as follows from $(a,b,c)$ with $a \leq b \leq c$:
\begin{enumerate} \itemsep0em
\item $(a,b,c) \to (a+k,b-k,c)$ with $k \leq (b-a)/2$; or
\item $(a,b,c) \to (a+k,b,c-k)$ with $k \leq (c-a)/2$; or
\item $(a,b,c) \to (a,b+k,c-k)$ with $k \leq (c-b)/2$.
\end{enumerate}

A position is called an {\it $\N$-position} if the player about to move next has a plan of moves to win.
Otherwise, we have a {\it $\P$-position}.
We also use $\P$ as the set of $\P$-positions.
A position without available move is a terminal position.


For a subset $S$ of nonnegative integers, $\operatorname{mex}(S)$ is the least nonnegative integer not in $S$.
By this, $\operatorname{mex}(\emptyset) = 0$ (corresponding to terminal position).
The {\it Sprague-Grundy value} $\G(p)$, also known as {\it nim-value}, of a position $p$ is defined recursively as follows:
$\G(p) = \operatorname{mex}\{\G(q) \mid  \text{ there exists a move from $p$ to $q$}\}$. We call $k$-position a position whose nim-value is $k$. One can see that a position is a $\P$-position if and only if it is a $0$-position.

Nim-values are essential for the understanding of those games that can be decomposed into separate components such that each of which is also a game. We call such cases sums of games. In other words, a {\it sum} $G+H$ of two given games $G$ and $H$ is a game in which a move can be made in either $G$ or $H$ following exactly the rules for chosen game. The sum game ends if there is no move available from both $G$ and $H$. It was showed independently by Sprague \cite{Spr36, Spr37} and Grundy \cite{Gru39} that $\G(G+H) = \G(G) \oplus \G(H)$ in which $\oplus$, call {\it nim-addition}, is the addition in binary number system without carrying, meaning $1 \oplus 1 = 0$. In particular, $G+H$ is a $\P$-positions if and only if $\G(H) = \G(G)$.

For example, the game of {\sc Nim} with multiple piles is the sum of single piles and the nim-value of position consisting of $n$ piles $a_1, a_2, \ldots, a_n$ is $\G(a_1) \oplus \G(a_2) \oplus \cdots \oplus \G(a_n)$.

We start with a remark on the simplification of positions of 3-pile {\sc Sharing Nim}. By the definition of the game, we have
\begin{lemma} \label{Lem.sim}
For positive integers $a, b$, and $c$ with $a \leq b \leq c$, $\G(a,b,c) = \G(0,b-a,c-a)$. We also write $(a,b,c) \equiv (0,b-a,c-a)$.
\end{lemma}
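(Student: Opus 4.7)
The plan is to exhibit a bijection between the positions reachable from $(a,b,c)$ and those reachable from $(0,b-a,c-a)$ via the translation $\phi(x,y,z)=(x-a,y-a,z-a)$, and then deduce equality of Sprague--Grundy values by induction.

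First I will observe that {\sc Sharing Nim} is translation-invariant: the three move constraints $k\le(b-a)/2$, $k\le(c-a)/2$, $k\le(c-b)/2$ depend only on pile \emph{differences}. Hence a type-$i$ move from $(a,b,c)$ with parameter $k$ corresponds under $\phi$ to a type-$i$ move from $(0,b-a,c-a)$ with the same $k$, and conversely, so $\phi$ gives a bijection of children. To ensure $\phi$ lands in triples of nonnegative integers I will check that in each move type the smallest pile (initially of size $a$) only grows or remains at $a$, while any pile that shrinks still ends bounded below by the pile that gains; inductively, every descendant of $(a,b,c)$ has all coordinates $\ge a$.

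The induction itself is on the range $r=c-a$. A routine case check (using $k\ge 1$) shows that every legal move strictly decreases $r$, making the induction well-founded. In the base cases $r\in\{0,1\}$ no positive integer $k$ satisfies any of the three move constraints, so both $(a,b,c)$ and $(0,b-a,c-a)$ are terminal with nim-value $0$. For $r\ge 2$, the bijection of children combined with the inductive hypothesis implies that the mex-sets defining $\G(a,b,c)$ and $\G(0,b-a,c-a)$ coincide, giving the desired equality. One minor point to keep in mind is that a type-2 move may produce an unsorted triple (for instance $(1,2,5)\to(3,2,3)$), but since $\G$ depends only on the multiset of pile sizes this is harmless. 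I do not expect any substantial obstacle: the lemma essentially records the translation-invariance of the rules.
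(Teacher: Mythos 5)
Your proof is correct and is essentially the paper's argument made explicit: the paper dismisses the lemma with ``by the definition of the game,'' i.e.\ the observation that all three move constraints depend only on pile differences, and you formalize exactly that translation-invariance via a bijection of children and a well-founded induction on $c-a$. The extra details you supply (every move strictly decreases $c-a$; the base cases $c-a\le 1$ are terminal) all check out.
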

Thus, it is enough to study positions $(0,a,b)$. We also assume that $a \leq b$ in that notation.
\begin{remark}
By Lemma \ref{Lem.sim}, we will use $(0,a,b)$ to denote the class of all positions $(c, a+c, b+c)$.
\end{remark}


\begin{table}[ht]
\begin{center}
\begin{tabular}{c|cccccccccccccccccccc}
16     &  &  &  &  &  &  &  &  &  &  &  &  &  &  &  &   &0 \\
15     &  &  &  &  &  &  &  &  &  &  &  &  &  &  &  &0  &11 \\
14     &  &  &  &  &  &  &  &  &  &  &  &  &  &  &1 &10 &10 \\
13     &  &  &  &  &  &  &  &  &  &  &  &  &  &0 &9 &10 &11 \\
12     &  &  &  &  &  &  &  &  &  &  &  &  &0 &2 &9 &8  &9 \\
11     &  &  &  &  &  &  &  &  &  &  &  &0 &7 &1 &7 &5  &11  \\
10     &  &  &  &  &  &  &  &  &  &  &1 &7 &8 &8 &4 &5  &11  \\
9     &  &  &  &  &  &  &  &  &  &0 &6 &2 &8 &9 &9 &5  &6  \\
8      &  &  &  &  &  &  &  &  &3 &6 &6 &7 &6 &9 &7 &10 &7  \\
7      &  &  &  &  &  &  &  &0 &5 &1 &3 &7 &8 &4 &10&10 &6  \\
6      &  &  &  &  &  &  &1 &4 &5 &4 &4 &5 &5 &4 &7 &5  &11  \\
5      &  &  &  &  &  &0 &4 &2 &5 &6 &6 &5 &8 &9 &9 &5  &11  \\
4      &  &  &  &  &0 &3 &2 &3 &3 &6 &4 &7 &6 &9 &4 &5  &9 \\
3      &  &  &  &0 &3 &1 &4 &3 &5 &4 &3 &7 &8 &8 &7 &8  &11 \\
2      &  &  &1 &2 &2 &1 &2 &2 &5 &1 &6 &2 &8 &1 &9 &10 &10 \\
1      &  &0 &1 &2 &3 &3 &4 &4 &5 &6 &6 &7 &7 &2 &9 &10 &11 \\
0      &0 &0 &1 &0 &0 &0 &1 &0 &3 &0 &1 &0 &0 &0 &1 &0  &0 \\
\hline
a/b    &0 &1  &2 &3  &4  &5  &6  &7  &8 &9  &10  &11 &12 &13 &14 &15  &16
 \end{tabular}
\caption{nim-values $\G(0,a,b)$ for $0 \leq a \leq  b \leq 16$}\label{T1}
\end{center}
\end{table}

We give nim-values $\G(0,a,b)$ for $0 \leq a \leq  b \leq 16$ in Table \ref{T1} in which the entry $(a,b)$ is the nim-value $\G(0,a,b)$.
The top diagonal that contains all $(a,a)$-entries is called main diagonal.
One can see in Table \ref{T1} that each column is equal to its reverse.
For example, the $5^{th}$ column in Table \ref{T1} is $(0, 3, 1, 1, 3, 0)$.

We will prove this phenomenon in the next section.
We then characterize $\P$-positions and discuss related properties in Section \ref{S.P}.
It will be shown that the winning strategy can be computed in quadratic time. The strategy can also be taught to first-year students in secondary school, requiring only background on arithmetics.
Section \ref{S.P.per} discusses the periodicity of $\P$-positions and $\N$-position. Especially, we raise evidence supporting the conjecture that rows and diagonals in Table \ref{T1} seem to be bounded but not ultimately periodic.
In Section \ref{S.v1}, we give a formula for 1-positions.
We discuss the distribution of nim-values in two dimensional array in Section \ref{S.Dis}.


\section{Symmetry and palindromes} \label{S.palindrome}

We explain the two properties of Table \ref{T1} mentioned in Section \ref{S.int}. A sequence is a {\it palindrome} if it reads the same backward or forward. For example, $(1,2,3,2,1)$ is a palindrome. We show that for $a \leq b/2$, the sequence $(\G(0,a,b), \G(0,a+1,b), \ldots, \G(0,b-a,b) )$ is a palindrome. This explains the phenomenon of columns we have mentioned on Table \ref{T1} in Section \ref{S.int}.

\bigskip


\begin{theorem} \label{thm.ab}
For all $a, b$ such that $0 \leq a \leq b$, $\G(0,a,b) = \G(0,b-a,b)$.
\end{theorem}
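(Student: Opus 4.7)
My plan is to prove $\G(0,a,b)=\G(0,b-a,b)$ by strong induction on $b$, via the involution $\sigma(0,c,d):=(0,d-c,d)$ on simplified positions. The base case $b=0$ is the terminal position $(0,0,0)$, whose nim-value is $0$ trivially symmetric.

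For the inductive step, I show that $\sigma$ restricts to a bijection from the option set of $(0,a,b)$ onto the option set of $(0,b-a,b)$. A type~(1) move from $(0,a,b)$ at parameter $k\leq\lfloor a/2\rfloor$ yields $(0,a-2k,b-k)$; applying $\sigma$ gives $(0,b-a+k,b-k)$, which is exactly the type~(3) move from $(0,b-a,b)$ at the same $k$. Symmetrically, type~(3) from $(0,a,b)$ corresponds to type~(1) from $(0,b-a,b)$. For type~(2) moves, at parameter $k\in\{1,\ldots,\lfloor b/2\rfloor\}$ on both sides, the resulting multisets $\{k,a,b-k\}$ and $\{k,b-a,b-k\}$ must be sorted and simplified using Lemma~\ref{Lem.sim}. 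I partition the range of $k$ into three exhaustive subcases: (I) $k\leq\min(a,b-a)$; (II) $b-a<k\leq a$ (requires $a>b/2$); and (III) $a<k\leq b-a$ (requires $a<b/2$). In each subcase a direct arithmetic computation shows that the two simplified destinations are $\sigma$-images of each other.

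Because every option of $(0,a,b)$ has strictly smaller largest coordinate $b'<b$, the inductive hypothesis gives $\G(p)=\G(\sigma(p))$ for each option $p$. It follows that $(0,a,b)$ and $(0,b-a,b)$ reach the same set of nim-values among their options, so their mex values coincide, proving the theorem.

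The main obstacle is the type~(2) bookkeeping. Sorting the multiset $\{k,a,b-k\}$ depends on the comparisons of $k$ with both $a$ and $b-a$, yielding three distinct simplified forms; one must verify case by case that each matches, under $\sigma$, the corresponding sorted form of $\{k,b-a,b-k\}$, and that the ranges of $k$ on the two sides agree. The range agreement is immediate from the fact that the bound $\lfloor b/2\rfloor$ is symmetric under $a\leftrightarrow b-a$.
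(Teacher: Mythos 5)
Your proof is correct and follows essentially the same route as the paper: induction on the largest pile $b$, matching the options of $(0,a,b)$ with those of $(0,b-a,b)$ via the complement map and invoking the inductive hypothesis on the (strictly smaller) resulting pile sizes. The only differences are organizational — you use a single strong induction where the paper adds a (redundant) inner induction on $a$, and you carry out the type-(2) case analysis that the paper leaves to the reader.
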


\begin{proof}
We prove the theorem by induction on $b$. It can be checked that the theorem holds for $b = 1$ and $b = 2$.
Assume that the theorem holds for $b \leq n$ for some $n \geq 2$. We show that the theorem holds for $b = n+1$.
We prove this claim by induction on $a$.

We first prove the case $a = 1$. We show that
\[\G(0,0,n+1) = \G(0,n+1,n+1).\]
A move from either of these two positions is to take $k$ tokens from a pile of size $n+1$ and add to a pile of size 0 provided that $k \leq n+1-k$.
It remains to show that
\begin{align} \label{pro.eq.1}
\G(0,k,n+1-k) = \G(k,n+1-k,n+1).
\end{align}
By Lemma \ref{Lem.sim}, the RHS is $\G(0,n+1-2k,n+1-k)$. Since $n+1-k \leq n$, by the inductive hypothesis on $b$, (\ref{pro.eq.1}) holds.

Assume that for $b = n+1$ the theorem holds for all $a \leq m$ for some $m$ such that $0 \leq m < b$. We show that the theorem holds for $a = m+1$ or equivalently \begin{align} \label{pro.eq.2}
\G(0,m+1,n+1) = \G(0,n-m,n+1).
\end{align}
\begin{figure}[ht]
\centering
\begin{tikzpicture}

\draw [black, fill=cyan] (1,1) -- (2,1) -- (2,3) -- (3,3) -- (3,4) -- (4,4) -- (4,6) -- (1,6) -- (1,2);
\draw [black, fill=red] (1,0) -- (1,1) -- (2,1) -- (2,3) -- (3,3) -- (3,4) -- (4,4) -- (4,0) -- (1,0);

\draw [dashed] (0,6) -- (11,6);
\node at (-1,6) {$n$};

\draw (1,0) -- (1,6);
\draw (2,0) -- (2,6);
\draw (3,0) -- (3,6);

\node at (0.5,3.5) {q $\rightarrow$};
\node at (4.5,2) {$\leftarrow$ p};

\node at (1.5,.5) {$a$};
\node at (2.5,.5) {$b$};
\node at (3.5,.5) {$c$};

\node at (1.5,4.5) {$n-a$};
\node at (2.5,5) {$n-b$};
\node at (3.5,5.5) {$n-c$};

\node at (2.5,-.4) {$(\alpha)$};

\draw [black, fill=cyan] (7,1.5) -- (8,1.5) -- (8,2.5) -- (9,2.5) -- (9,4) -- (10,4) -- (10,6) -- (7,6) -- (7,2);
\draw [black, fill=red] (7,0) -- (7,1.5) -- (8,1.5) -- (8,2.5) -- (9,2.5) -- (9,4) -- (10,4) -- (10,0) -- (7,0);

\draw (7,0) -- (7,6);
\draw (8,0) -- (8,6);
\draw (9,0) -- (9,6);

\node at (6.5,3.5) {q' $\rightarrow$};
\node at (10.5,2) {$\leftarrow$ p'};

\node at (7.5,.5) {$a+k$};
\node at (8.5,.5) {$b-k$};
\node at (9.5,.5) {$c$};

\node at (7.5,4.5) {$n-a$}; \node at (7.5,4) {$-k$};
\node at (8.5,5) {$n-b$}; \node at (8.5,4.5) {$+k$};
\node at (9.5,5.5) {$n-c$};

\node at (6.7,1.25) {$k \{$};
\node at (7.7,2.75) {$k \{$};

\draw [dashed] (7,1) -- (8,1);
\draw [dashed] (8,3) -- (9,3);

\draw [dashed] (7.5,1.25) -- (8.5,2.75);

\node at (8.5,-.4) {$(\beta)$};

\end{tikzpicture}
\caption{$(\alpha)$: A position $p$ (red) and its complement position $q$ (blue) with $\G(p) = \G(q)$. $(\beta)$: Two moves $p \to p'$ and $q \to q'$ are symmetric. The symmetry still holds when $n$ is increased.} \label{F.com}
\end{figure}
Let $p = (0,m+1,n+1)$ and $q = (0,n-m,n+1)$. It suffices to prove the two following facts:
\begin{enumerate} \itemsep0em
\item [\rm{(i)}]  if there exists a move from $p$ to $p'$ then there exists a move from $q$ to $q'$ such that $\G(q') = \G(p')$;
\item [\rm{(ii)}] if there exists a move from $q$ to $q'$ then there exists a move from $p$ to $p'$ such that $\G(p') = \G(q')$.
\end{enumerate}

From some position $(0,x,y)$, there may be three types of moves, to $(k,x-k,y)$ or $(0,x+k,y-k)$ or $(k,x,y-k)$.
We will prove only one type of moves in \rm{(i)}.
The other types of moves of \rm{(i)} and \rm{(ii)} can be treated essentially the same and we leave to the reader. Using Figure \ref{F.com}, it is easy to find the corresponding moves  to $p', q'$ required in \rm{(i)} and \rm{(ii)} above. We will mention this as bellow.

Consider the move $p \to (k,m+1-k,n+1) = (0,m+1-2k,n+1-k) = p'$ where $k \leq (m+1)/2$. In Figure \ref{F.com}, this option moves $k$ tokens from the red middle pile to the left one.
Consider the move from $q$ to $q' = (0,n+1-m+k, n+1-k)$. In Figure \ref{F.com}, this option moves $k$ tokens from the blue left pile to the middle one.
Since $n+1-k \leq n$, by the inductive hypothesis on $b$, $\G(q') = \G(p')$ and so (\ref{pro.eq.2}) holds.

By the principle of mathematical induction, the theorem holds for $b = n+1$ and $a \leq b$.
Again, by the principle of mathematical induction, the theorem holds for all $b$ and $a \leq b$.
\end{proof}

Theorem \ref{thm.ab} can be explained as follows. Given position $p = (a,b,c)$ and $n \geq c$. Consider the position $q = (n-a,n-b,n-c)$. By Lemma \ref{Lem.sim} and Theorem \ref{thm.ab}, $\G(p) = \G(q)$. Note that the position $q$ obtained by adding the amount of tokens to each pile from $p$ until reaching $n$. By Lemma \ref{Lem.sim}, it is sufficient to consider the case $a = 0$ and $n = c$. We say that $q$ is the complement of the position $p$ (Figure \ref{F.com} ($\alpha$)).

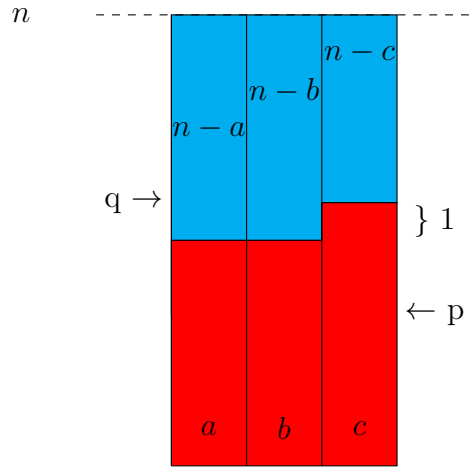
\begin{figure}[ht]
\centering
\begin{tikzpicture}

\draw [black, fill=cyan] (1,3) -- (2,3) -- (3,3) -- (3,3.5) -- (4,3.5) -- (4,6) -- (1,6) -- (1,2);
\draw [black, fill=red] (1,0) -- (1,3) -- (2,3) -- (3,3) -- (3,3.5) -- (4,3.5) -- (4,0) -- (1,0);

\draw (1,0) -- (1,6);
\draw (2,0) -- (2,6);
\draw (3,0) -- (3,6);

\node at (4.5,3.25) {$\}$ 1};

\node at (0.5,3.5) {q $\rightarrow$};
\node at (4.5,2) {$\leftarrow$ p};

\node at (1.5,.5) {$a$};
\node at (2.5,.5) {$b$};
\node at (3.5,.5) {$c$};

\node at (1.5,4.5) {$n-a$};
\node at (2.5,5) {$n-b$};
\node at (3.5,5.5) {$n-c$};

\draw [dashed] (0,6) -- (5,6);
\node at (-1,6) {$n$};

\end{tikzpicture}
\caption{A possible base case.} \label{F.com.base}
\end{figure}

\begin{remark}
One can also use Figure \ref{F.com} to prove Theorem \ref{thm.ab}. In order to prove this, we prove the formula $\G(a,b,c) = \G(n-a,n-b,n-c)$ with $n$ being fixed. The proof can be done by induction on $c-a$. The base case is when $c-a \leq 1$ (Figure \ref{F.com.base}).
\end{remark}


\bigskip

Two games $G$ and $H$ are said to be isomorphic if there is one-to-one map $f$ between position sets of $G$ and $H$ such that $x \to x'$ is a move in $G$ if and only if there exist positions $y, y'$ in $H$ such that $y \to y'$ is a move and $f(x) = y$, $f(x') = y'$. The following  follows straightforwardly from the proof of Theorem \ref{thm.ab}.

\begin{corollary} \label{C.iso}
The two game positions $(0,a,b)$ and $(0,b-a,b)$ are isomorphic.
\end{corollary}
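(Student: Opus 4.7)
The plan is to make explicit the bijection hidden in the proof of Theorem \ref{thm.ab}. Fix $b$ and define the complement map $f$ on triples of non-negative integers bounded by $b$ by $f(x_1,x_2,x_3)=(b-x_1,b-x_2,b-x_3)$, regarded as unordered positions. Since the maximum pile size is non-increasing in Sharing Nim---a legal move from a source pile of size $s$ to a destination of size $d$ produces new pile sizes $s-k$ and $d+k\le s-k\le s$---every position reachable from $(0,a,b)$ has all coordinates in $[0,b]$, so $f$ is well-defined on the relevant position set. Observing that $f$ is an involution and that $f(0,a,b)=(b,b-a,0)\equiv(0,b-a,b)$ by Lemma \ref{Lem.sim}, it suffices to verify that $f$ preserves moves, for then the $f$-image of the set of positions reachable from $(0,a,b)$ will automatically coincide with the set of positions reachable from $(0,b-a,b)$.

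For the move-preservation step, I would consider any legal move that transfers $k$ tokens from a pile of size $s$ to a pile of size $d<s$ subject to $d+k\le s-k$. After applying $f$, these two piles have sizes $b-s<b-d$, and the image move transfers $k$ tokens in the opposite direction: from the pile of size $b-d$ to the pile of size $b-s$. The legality condition on the image move is $(b-s)+k\le(b-d)-k$, which rearranges to $d+k\le s-k$, exactly the condition on the original move. Hence the two moves are simultaneously legal, and since $f$ is an involution the argument applies in both directions. Together with the matching of the two roots, this furnishes a move-preserving bijection, establishing the isomorphism.

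The main obstacle is bookkeeping rather than mathematics: one must resist the temptation to expect that a single ``move type'' in the classification of three cases given just after Lemma \ref{Lem.sim} maps to the same type under $f$. It does not, because $f$ simultaneously swaps source with destination and reverses the size ordering of the piles, so for example a transfer from the middle pile to the smallest pile corresponds under $f$ to a transfer from the (new) largest pile to the (new) middle pile. Provided one checks this permutation of cases carefully---precisely the content of Figure \ref{F.com}$(\beta)$ in the proof of Theorem \ref{thm.ab}---no further induction is needed beyond what was already carried out there.
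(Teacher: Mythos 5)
Your proof is correct and takes essentially the same route as the paper: the complement map $f(x_1,x_2,x_3)=(b-x_1,b-x_2,b-x_3)$ is exactly the correspondence the paper leaves implicit in Figure \ref{F.com} when it declares the corollary to ``follow straightforwardly'' from the proof of Theorem \ref{thm.ab}. You have merely written out the details (well-definedness on $[0,b]^3$, the involution property, and the equivalence of the legality conditions $d+k\le s-k$ and $(b-s)+k\le(b-d)-k$) that the paper omits.
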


\begin{remark}
By Theorem \ref{thm.ab}, it suffices to analyze positions $(0,a,b)$ with $a \leq b/2$.
\end{remark}


The following corollary means each finite column displayed in Table \ref{T1} is a palindrome.

\begin{corollary}
For $a \leq b/2$, the sequence
$$(\G(0,a,b), \G(0,a+1,b), \ldots, \G(0,b-a,b) )$$
is a palindrome.
\end{corollary}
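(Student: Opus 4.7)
The plan is to reduce this corollary directly to Theorem \ref{thm.ab}. The sequence has length $b-2a+1$, and its entries are indexed by $i = 0, 1, \ldots, b-2a$, with the $i$-th entry being $\G(0, a+i, b)$. To establish the palindrome property, I need to show that the $i$-th entry from the left coincides with the $i$-th entry from the right, that is,
\begin{equation*}
\G(0, a+i, b) = \G(0, b-a-i, b) \quad \text{for all } 0 \leq i \leq b-2a.
\end{equation*}

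Next, I would verify that Theorem \ref{thm.ab} can be applied to the position $(0, a+i, b)$. The hypothesis of that theorem requires $0 \leq a+i \leq b$; since $a \leq b/2$ and $i \leq b-2a$, we have $a+i \leq b - a \leq b$, and clearly $a+i \geq 0$. Applying Theorem \ref{thm.ab} with the middle coordinate $a+i$ in place of $a$ then yields
\begin{equation*}
\G(0, a+i, b) = \G(0, b-(a+i), b) = \G(0, b-a-i, b),
\end{equation*}
which is exactly the required equality.

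There is no real obstacle here: the symmetry statement is tailored to produce this palindrome, and the only nontrivial piece of work has already been carried out in Theorem \ref{thm.ab}. The restriction $a \leq b/2$ simply ensures that the starting index $a$ and the ending index $b-a$ satisfy $a \leq b-a$, so the sequence is written out in the correct order and lies within the range handled by the theorem. Thus the proof is a one-line application of Theorem \ref{thm.ab} after rewriting the indices.
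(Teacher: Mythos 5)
Your proof is correct and is exactly the argument the paper intends: the corollary is stated there without proof as an immediate consequence of Theorem \ref{thm.ab}, and your index computation $\G(0,a+i,b)=\G(0,b-(a+i),b)$ is the one-line justification. Nothing is missing.
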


It also follows from Theorem \ref{thm.ab} that in Table \ref{T1}, the row of nim-values on the right of the entry $(a,2a)$ and the diagonal (parallel to the main diagonal) of nim-values starting from this entry are identical.

\begin{corollary}
The two sequences $(\G(0,a,2a+i))_{i \geq 0}$ and $(\G(0,a+i,2a+i))_{i \geq 0}$ are identical.
\end{corollary}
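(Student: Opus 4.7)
The plan is to derive this corollary as a direct consequence of Theorem \ref{thm.ab}. The theorem states that whenever $0 \leq a' \leq b'$ one has $\G(0,a',b') = \G(0,b'-a',b')$, so the strategy is simply to apply this identity with a well-chosen substitution that sends the left-hand sequence termwise to the right-hand sequence.

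Concretely, I would fix $a \geq 0$ and $i \geq 0$, and set $b' = 2a + i$ and $a' = a$. The hypothesis $a' \leq b'$ of Theorem \ref{thm.ab} reduces to $a \leq 2a + i$, which clearly holds since $a, i \geq 0$. Applying the theorem then yields
\[
\G(0,a,2a+i) \;=\; \G\bigl(0,\,(2a+i)-a,\,2a+i\bigr) \;=\; \G(0,a+i,2a+i),
\]
which is exactly the equality of the $i$-th terms of the two sequences. Since this holds for every $i \geq 0$, the sequences $(\G(0,a,2a+i))_{i \geq 0}$ and $(\G(0,a+i,2a+i))_{i \geq 0}$ coincide.

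There is no real obstacle: the corollary is a one-line substitution in Theorem \ref{thm.ab}. The only thing worth spelling out is the geometric meaning, namely that in Table \ref{T1} the row to the right of the entry at $(a,2a)$ and the diagonal starting at that entry, parallel to the main diagonal, are obtained from each other by the palindrome symmetry of the $(2a+i)$-th column, with $a$ and $a+i$ being equidistant from the midpoint $(2a+i)/2$.
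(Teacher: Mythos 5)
Your proof is correct and matches the paper's intended derivation: the corollary is stated there as an immediate consequence of Theorem \ref{thm.ab}, obtained exactly by the substitution $b' = 2a+i$, $a' = a$ that you spell out. Nothing is missing.
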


\section{Characteristics of $\P$-positions} \label{S.P}
We formulate $\P$-positions and describe related properties. By this formula, we can decide in quadratic time whether or not a given position is a $\P$-position. Moreover, from an $\N$-position, one can find a winning move (moving to some $\P$-position) in quadratic time.

\begin{theorem} \label{P}
The $\P$-positions form the set
\[ \{(0,0,0), (0,0,2^{2k}(2l+1)), (0,2^{2k}(2l+1),2^{2k}(2l+1)) \mid  k \geq 0, l\geq 0\}. \]
\end{theorem}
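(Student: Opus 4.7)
The plan is a strong induction on the reduced largest pile $b$. The key preliminary observation is that every legal move from $(0,a,b)$ produces a position whose reduced largest pile is strictly smaller than $b$: type (1) reduces to $(0,a-2k,b-k)$ and type (3) to $(0,a+k,b-k)$, each with largest pile $b-k<b$; for type (2), a short case split on the relative order of $k,a,b-k$ shows the reduced largest pile is one of $b-2k$, $a-k$, or $b-k-a$, each strictly less than $b$. Strong induction on $b$ is thus available.

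Write $G:=\{m\ge1:v_2(m)\text{ is even}\}$ and $\P^{*}:=\{(0,0,0)\}\cup\{(0,0,n),\,(0,n,n):n\in G\}$, where $v_2$ denotes the $2$-adic valuation. I prove $\P=\P^{*}$ by verifying \emph{closure} (every move out of $\P^{*}$ exits $\P^{*}$) and \emph{existence} (every position not in $\P^{*}$ has a move into $\P^{*}$). For closure, from $(0,0,n)$ with $n\in G$ the only moves go to $(0,k,n-k)$ with $1\le k\le n/2$; such a target lies in $\P^{*}$ only if $k=n-k\in G$, which forces $v_2(n)=v_2(k)+1$ to be odd, contradicting $n\in G$. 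Closure for $(0,n,n)\in\P^{*}$ follows from the isomorphism in Corollary \ref{C.iso}.

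For existence, the cases $(0,0,n)$ and $(0,n,n)$ with $n\notin G\cup\{0\}$ (so $v_2(n)$ is odd and $n$ even) are settled by the type-(2) move $k=n/2$, reaching $(0,n/2,n/2)$ and $(0,0,n/2)$ respectively, both with $v_2(n/2)$ even. The substantive case is $(0,a,b)$ with $0<a<b$; by Corollary \ref{C.iso} we may assume $a\le b/2$. I use only two type-(2) candidates: $k=a$ (valid since $a\le b/2$) yields $(0,0,b-2a)$, and $k=b/2$ (valid when $b$ is even) yields $(0,\,b/2-a,\,b/2-a)$. Set $C_1:=b-2a$ and $C_2:=b/2-a$, so $C_1=2C_2$. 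If $b$ is odd then $C_1$ is a positive odd integer (positive because $b$ odd combined with $a\le b/2$ forces $a<b/2$), whence $C_1\in G$ and the first move reaches $\P^{*}$. If $b$ is even and $a=b/2$, both candidates collapse to $(0,0,0)\in\P^{*}$. If $b$ is even and $a<b/2$, both $C_1,C_2$ are positive and $C_1=2C_2$ gives $v_2(C_1)=v_2(C_2)+1$; exactly one lies in $G$, and the corresponding move lands in $\P^{*}$.

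The main obstacle is existence in the generic $0<a<b$ case. A naive attack lists four candidate $\P^{*}$-reaching moves---one each from types (1) and (3), two from type (2)---and tries to find a working one via case analysis on $v_2(a)$ and $v_2(b)$. The shortcut is to keep only the two type-(2) candidates and exploit the algebraic pairing $C_1=2C_2$: this pairs a move to a $(0,0,\cdot)$-target with a move to a $(0,\cdot,\cdot)$-target whose parameters differ by exactly one factor of $2$, so a single parity observation resolves the entire case.
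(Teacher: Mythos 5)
Your proposal is correct and follows essentially the same route as the paper's proof: the same closure checks (a move out of $(0,0,n)$ or $(0,n,n)$ can only re-enter the candidate set by halving $n$, which flips the parity of the $2$-adic valuation), and the same two existence moves, namely the type-(2) moves with $k=a$ and $k=b/2$, decided by which of $b-2a=2(b/2-a)$ has even $2$-adic valuation. Your explicit treatment of the edge case $a=b/2$ and the well-founded measure (the reduced largest pile strictly decreases) are minor tidiness improvements over the paper's write-up, not a different argument.
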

\begin{proof}
Set
\[\A =  \{(0,0,0), (0,0,2^{2k}(2l+1)), (0,2^{2k}(2l+1),2^{2k}(2l+1)) \mid  k \geq 0, l\geq 0\}.\]
We need to prove two facts:
\begin{enumerate} \itemsep0em
\item there is no move between any two distinct positions in $\A$, and
\item from any nonterminal position not in $\A$, there exists a move that terminates in $\A$.
\end{enumerate}
For (1), first note that there is no move from either $(0,0,2^{2k}(2l+1))$ or $(0,2^{2k}(2l+1),2^{2k}(2l+1))$ to $(0,0,0)$.
There is no move from a position of the form $(0,0,2^{2k}(2l+1)$ to some other position of the same form since such a move increases one pile of size 1 and keeps the other unchanged.
There is also no move from a position of the form $(0,2^{2k}(2l+1),2^{2k}(2l+1))$ to some other position of the same form since such a move reduces one of the two biggest piles. It remains to show that
\begin{enumerate} \itemsep0em
\item [\rm{(i)}] there is no move $(0,0,2^{2k}(2l+1)) \to (0,2^{2k'}(2l'+1),2^{2k'}(2l'+1))$, and
\item [\rm{(ii)}] there is no move $(0,2^{2k}(2l+1),2^{2k}(2l+1)) \to (0,0,2^{2k'}(2l'+1))$.
\end{enumerate}
For \rm{(i)}, the only possible move that equalizes the two biggest piles is $(0,0,2^{2k}(2l+1)) \to (0,2^{2k-1}(2l+1),2^{2k-1}(2l+1))$. For \rm{(ii)}, the only move that equalizes the smallest two piles is $(0,2^{2k}(2l+1),2^{2k}(2l+1)) \to (2^{2k-1}(2l+1),2^{2k-1}(2l+1),2^{2k}(2l+1)) \equiv (0,0,2^{2k-1}(2l+1))$. Since the power of $2$ in resulted positions are $2k-1$, \rm{(i)} and \rm{(ii)} hold.

We now prove (2). Let $p = (0,a,b)$ be a position not in $\A$.
By Theorem \ref{thm.ab}, it suffices to choose $a, b$ such that $a \leq b/2$.
Set $b = a+y$. Then $p = (0,a,a+y)$ with $a \leq y$.

Consider $y-a$. There exist $k \geq 0$ and $l \geq 0$ such that either $y-a = 2^{2k}(2l+1)$ or $y-a = 2^{2k+1}(2l+1)$. For the former case, $p$ can be moved to
\begin{align*}
q &= (a,a,a+y-a) = (0,0,y-a)  \\
  &= (0,0,2^{2k}(2l+1)) \in \A.
\end{align*}
For the latter case, $p$ can be moved to
\begin{align*}
q' &= \big(a+2^{2k}(2l+1),a,a+y-(a+2^{2k}(2l+1))\big) \\
   &= (0,2^{2k}(2l+1),2^{2k}(2l+1)) \in \A.
\end{align*}
\end{proof}

Note that the two forms of $\P$-positions in Theorem \ref{P} are symmetric by Theorem \ref{thm.ab}.

\begin{remark} \label{R.P-linear}
To decide whether or not a given position is a $\P$-position, it is sufficient to test if there are two equal piles and the difference between this size and that of the remaining pile is equal to some $2^{2k}(2l+1)$. The first task involves three subtractions, requiring linear time. The second task is to shift all the bits two squares to the right repeatedly (dividing by $4$ repeatedly) before checking if the last digit is 1. Shifting rightwards two bits requires $4L$ times, with $L$ being the length ò the input. When the input contains only 1's, we need to repeat shifting $L/2$ times. Therefore, the second task requires quadratic time.
\qed
\end{remark}

\bigskip


We have the following optimization property.

\begin{remark} \label{R.opt-move}
It follows from Theorem \ref{P} and Lemma \ref{Lem.sim} that from an $\N$-position $(a,b,c)$, exactly one of the following three moves is the winning move:
\begin{enumerate} \itemsep0em
\item equalizing the two piles $a$ and $b$: moving $(b-a)/2$ tokens from pile $b$ to pile $a$, or
\item equalizing the two piles $b$ and $c$, or
\item equalizing the two piles $a$ and $c$.
\end{enumerate}
Especially, if there are two equal piles, the winning move, if it exists, can be made simply without knowledge of $\P$-positions shown in Theorem \ref{P}: equalizing one of these two piles with the third pile (the pile having different size).
\qed
\end{remark}

Thus, the winning strategy can be taught to first year students at high school with basic background on arithmetics: starting with each option in Remark \ref{R.opt-move} before dividing the difference in the size repeatedly by 4 until reaching either an odd number or an even number not divisible by 4. If the former case holds, that move is chosen. Otherwise, take another option in Remark \ref{R.opt-move}.

\begin{remark} \label{R.N-linear}
Since each option in Remark \ref{R.opt-move} can be done in linear time, by Remark \ref{R.P-linear}, from an $\N$-position, a move to some $\P$-position can be computed in quadratic time.
\qed
\end{remark}

We next answer the following counting problem.
Suppose two players want to play this game with some $n$ tokens.
They need to decide the sizes of three piles.
Assume one player volunteers to do this job with the intention of letting the other move first. How many different ways that she can arrange the sizes of tokens to win. 

\begin{proposition} \label{P.counting}
Let $P(n)$ the number of $\P$-positions involving exactly $n$ tokens. Then $P(n) = \lfloor n/3 \rfloor$ in which $\lfloor.\rfloor$ denotes the integer part.
\end{proposition}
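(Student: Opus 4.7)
The plan is to enumerate $\P$-positions directly from Theorem \ref{P}. A $\P$-position $(a,b,c)$ with $a \le b \le c$ and $a+b+c = n$ reduces via Lemma \ref{Lem.sim} to a member of $\mathcal{A}$, so it falls in exactly one of three families: (i) $a = b = c$ with $3a = n$; (ii) $a = b$ and $c = a + m$ with $3a + m = n$; or (iii) $b = c$ and $b = a + m$ with $3a + 2m = n$, where in (ii) and (iii) the parameter $m$ belongs to $S := \{2^{2k}(2l+1) : k,l \ge 0\}$, the positive integers whose $2$-adic valuation is even. I would impose $a \ge 1$ at the outset so that the setup is a genuine three-pile configuration, and then count each family.

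The central observation is the disjoint union $\mathbb{Z}_{>0} = S \sqcup 2S$: the condition ``$v_2(r)$ is odd'' is equivalent to ``$r/2 \in S$'', so every positive integer either already lies in $S$ or is exactly twice an element of $S$. Consequently families (ii) and (iii) are jointly parameterized by the single variable $r := n - 3a > 0$: if $r \in S$ then setting $m = r$ gives a family-(ii) position, and if $r \in 2S$ then setting $m = r/2$ gives a family-(iii) position. The congruence $r \equiv n \pmod 3$ is automatic, and the constraint $a \ge 1$ translates to $r \le n-3$.

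With the bijection in place, $P(n)$ equals the number of $r \in \{1, 2, \ldots, n-3\}$ with $r \equiv n \pmod 3$, plus an extra $1$ whenever $3 \mid n$ and $n \ge 3$ to account for family (i). A case split on $n \bmod 3$ then collapses each of the three counts to $\lfloor n/3 \rfloor$, and one checks $P(0) = P(1) = P(2) = 0$ separately by hand.

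The main obstacle I anticipate is the counting convention rather than any deep combinatorics: if pile sizes $0$ were permitted, each family would contribute one additional position and the total would shift to $\lfloor n/3 \rfloor + 1$. So the real care lies in justifying that the volunteer is setting up three genuine nonempty piles, and then tracking the endpoints of the arithmetic progression $\{r \equiv n \pmod 3 : 1 \le r \le n-3\}$ cleanly in each of the three residue classes.
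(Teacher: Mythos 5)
Your proposal is correct and follows essentially the same route as the paper: both arguments rest on the classification in Theorem \ref{P} together with the dichotomy that every positive integer is either of the form $2^{2k}(2l+1)$ or twice such a number (your $\mathbb{Z}_{>0}=S\sqcup 2S$), so that each admissible value of $a$ (equivalently, of $r=n-3a$) yields exactly one $\P$-position. Your reparameterization by $r$ and explicit handling of the $3\mid n$ case are only cosmetic refinements of the paper's count over $1\le a\le\lfloor n/3\rfloor$.
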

\begin{proof}
A $\P$-position involving $n$ tokens must be $(a,a,a)$ or $(a,a,a+b)$ or $(a,a+b,a+b)$ for some $a \leq \lfloor n/3 \rfloor$ and $b = 2^{2k}(2l+1)$. Moreover, if $(a,a,a+b)$ is a $\P$-position then $(a,a+b/2,a+b/2)$ is not a $\P$-position by Theorem \ref{P}. Similarly, if $(a,a+b,a+b)$ is a $\P$-position then $(a,a,a+2b)$ is not a $\P$-position. Thus, for each $a$, there is at most one way to arrange the remaining $n-3a$ to establish a $\P$-position. Since $a \leq \lfloor n/3 \rfloor$, there are at most $\lfloor n/3 \rfloor$ $\P$-positions involving exactly $n$ tokens.

For each $a$ such that $1 \leq a \leq \lfloor n/3 \rfloor$, consider $n-3a$. Note that either $n-3a = 2^{2k}(2l+1)$ or  $n-3a = 2^{2k+1}(2l+1)$ for some $k, l \geq 0$. In the former case, we have $\P$-position $(a,a,a+2^{2k}(2l+1))$. In the latter case, we have $\P$-position $(a,a+2^{2k}(2l+1),a+2^{2k}(2l+1))$. Therefore, there are exactly $\lfloor n/3 \rfloor$ $\P$-positions.
\end{proof}


\section{How is not an apparently bounded nim-sequence periodic?} \label{S.P.per}

We conjecture that nim-values in each row in Table \ref{T1} are bounded.
We then discuss the periodicity of $\P$-positions and nim-values on rows of Table \ref{T1}.

Consider nim-values on Table \ref{T1} and suppose we can extend the table large enough. One may have following questions.
\begin{enumerate} \itemsep0em
\item Are the nim-values on each row bounded? If so, is each row ultimately periodic?
\item If the nim-values on each row are not bounded, is each row ultimately arithmetic periodic?
\end{enumerate}
Recall that a sequence $(s_i)$ is said to be ultimately periodic (resp.~arithmetic periodic) if there exist $n_0$ and $p > 0$ (resp.~and $s > 0$) such that for all $n \geq n_0$, $s_{i+p} = s_i$ (resp.~$s_{i+p} = s_i + s$). An arithmetic periodic sequence is said to be additive periodic if $s = p$.

By Theorem \ref{P}, the bottom row of Table \ref{T1}, with $a = 1$, is not ultimately arithmetic periodic and, moreover, has entries $0$ when $b$ is even. Thus, question (2) is not relevant for the bottom row and also may not relevant for other rows. Moreover, our computation leads us to the following conjecture.

\begin{conjecture} \label{C.bound}
Given $a \geq 1$, the nim-sequence $(\G(0,a,n))_{n \geq a}$ is bounded.
\end{conjecture}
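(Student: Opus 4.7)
The plan is to attempt a proof by strong induction on $a$, using Theorem~\ref{thm.ab} to rewrite each option in canonical form so that the inductive hypothesis can be applied as often as possible. For every $a \geq 1$, set
\[
B(a) \;=\; \sup_{n \geq a} \G(0,a,n),
\]
so that Conjecture~\ref{C.bound} reads $B(a) < \infty$. I would assume $B(a') < \infty$ for every $a' < a$ and try to bound $B(a)$ in terms of the $B(a')$'s.

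First I would catalogue the options from $(0,a,n)$ in the regime $n \geq 2a$. Applying Lemma~\ref{Lem.sim} to each of the three move types arranges the options into four families:
\begin{align*}
& (0,\,a-2k,\,n-k), \quad 1 \leq k \leq \lfloor a/2 \rfloor,\\
& (0,\,a-k,\,n-2k), \quad 1 \leq k \leq a,\\
& (0,\,j,\,n-2a-j), \quad 1 \leq j \leq \lfloor n/2 \rfloor - a,\\
& (0,\,a+k,\,n-k), \quad 1 \leq k \leq \lfloor (n-a)/2 \rfloor,
\end{align*}
the third being the large-$k$ part of Type~(2) after substituting $j = k - a$, and the fourth coming from Type~(3). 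Theorem~\ref{thm.ab} converts every option $(0,a+k,n-k)$ in the fourth family to $(0,\,n-a-2k,\,n-k)$, so each of these options also admits a canonical form with small pile at most $a$ once $k$ is large enough.

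The first two families have small pile strictly less than $a$, so by the inductive hypothesis their Grundy values lie in the fixed finite set $\bigcup_{a'<a}\{0,1,\dots,B(a')\}$, a bound independent of $n$. The remaining two families are the real obstruction: family~(iii) sweeps out the anti-diagonal $\{(0,j,m) : j + m = n - 2a\}$, and family~(iv), after the symmetry, sweeps out the locus $\{(0,j,m) : 2m - j = n + a\}$. Their initial segments (with $j < a$) are again controlled by the induction, but their tail portions (with $j \geq a$) are not.

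The main obstacle is therefore to bound, uniformly in $n$, the Grundy values along those tails. A natural attack is a secondary induction on a size parameter such as $a + \lfloor \log_4(n+1) \rfloor$, motivated by the $2$-adic structure of the $\P$-positions in Theorem~\ref{P}. Alternatively, one could hunt for a small value $v = v(a)$ that provably fails to appear among the options for every sufficiently large $n$, which would force $\G(0,a,n) \leq v$ and close the induction. Either route requires a genuinely new structural lemma describing how the move rules interact with the $2$-adic data of the pile sizes, and producing that lemma appears to me to be the crux of the problem.
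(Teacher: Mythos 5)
There is a genuine gap here, and in fact the statement you are trying to prove is left open in the paper itself: Conjecture~\ref{C.bound} is stated as a conjecture, supported only by computation (the first $490$ values of $(\G(0,0,n))$ bounded by $12$, the first $500$ values of $(\G(0,2,n))$ bounded by $41$, etc.), so there is no proof in the paper to compare yours against. Your own write-up concedes the point at the end: the argument is a plan whose decisive step --- a ``genuinely new structural lemma'' controlling the tails of families (iii) and (iv) --- is not supplied.

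To be concrete about where the induction breaks: your catalogue of options and the use of Theorem~\ref{thm.ab} to put family (iv) into the canonical form $(0,\,n-a-2k,\,n-k)$ are both correct, and if \emph{all} options had Grundy values in a set bounded independently of $n$, the mex would indeed be bounded and the induction would close. But family (iii) produces options $(0,j,\,n-2a-j)$ with $j$ ranging up to about $n/2-a$, and family (iv) similarly produces canonical small piles of order up to $n-a$. So for a single fixed $a$ the options sweep through rows $j$ of every size as $n\to\infty$. Strong induction on $a$ gives you nothing about those rows, and an auxiliary induction on $n$ (or on $a+\lfloor\log_4(n+1)\rfloor$) does not help either, because what you would need is not merely $B(j)<\infty$ for each $j$ but a bound on $\sup_j B(j)$ restricted to the anti-diagonal $j+m=n-2a$ that is uniform in $n$ --- and that is essentially a restatement of the conjecture itself, so the scheme is circular at exactly the point you flag. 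Absent the structural lemma tying the move rules to the $2$-adic data (the only such information currently available is Theorem~\ref{P} for value $0$ and Theorem~\ref{v1} for value $1$), the proposal does not constitute a proof.
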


For examples, the first $490$ nim-values of the sequence $(\G(0,0,n)_{n \geq 1})$ are bounded by 12 (Table \ref{T.P}, read from left to right); the first $500$ nim-values of the sequence $(\G(0,2,n)_{n \geq 3})$ are bounded by 41; the first $800$ nim-values of the sequence $(\G(0,6,n)_{n \geq 3})$ are bounded by 72. We include data for the second and the third examples in Appendix \ref{App} (Figures \ref{row3} and \ref{row7}).

If Conjecture \ref{C.bound} holds, the game displays a surprising pattern of nim-values. This property suggests that its Sprague-Grundy function would involve modulus division.

Note that multiple-pile {\sc Nim} and several of its variants have property that if the size of one pile increase, the nim-values ultimately increase. For example, on the two-dimension array of nim-values of {\sc Wythoff} whose $(i,j)$ entry is nim-value $\G(i,j)$ of {\sc Wythoff} game, every row is ultimately additive periodic \cite{Lan04} and every diagonal parallel to the main diagonal contains every nonnegative integer \cite{BF90}. Thus, rows and diagonals of nim-values for {\sc Wythoff} are not bounded.

We now move our attention to Question (1): is each row in Table \ref{T1} ultimately periodic? We start with the bottom row, studying the periodicity of the $\P$-positions. We list the first 490 values of the bottom row of Table \ref{T1} in Table \ref{T.P}.

\begin{table}[ht]
\begin{align*}
&0, 0, 1, 0, 0, 0, 1, 0, 3, 0, 1, 0, 0, 0, 1, 0, 0, 0, 1, 0, 0, 0, 1, 0,12, 0, 1, 0, 0, 0, 1, 0, \\
&2, 0, 1, 0, 0, 0, 1, 0, 3, 0, 1, 0, 0, 0, 1, 0, 0, 0, 1, 0, 0, 0, 1, 0, 4, 0, 1, 0, 0, 0, 1, 0, \\                                                               &0, 0, 1, 0, 0, 0, 1, 0, 2, 0, 1, 0, 0, 0, 1, 0, 0, 0, 1, 0, 0, 0, 1, 0, 3, 0, 1, 0, 0, 0, 1, 0, \\
&3, 0, 1, 0, 0, 0, 1, 0, 2, 0, 1, 0, 0, 0, 1, 0, 0, 0, 1, 0, 0, 0, 1, 0, 2, 0, 1, 0, 0, 0, 1, 0, \\
&2, 0, 1, 0, 0, 0, 1, 0, 2, 0, 1, 0, 0, 0, 1, 0, 0, 0, 1, 0, 0, 0, 1, 0, 4, 0, 1, 0, 0, 0, 1, 0, \\
&3, 0, 1, 0, 0, 0, 1, 0, 3, 0, 1, 0, 0, 0, 1, 0, 0, 0, 1, 0, 0, 0, 1, 0, 3, 0, 1, 0, 0, 0, 1, 0, \\
&0, 0, 1, 0, 0, 0, 1, 0, 2, 0, 1, 0, 0, 0, 1, 0, 0, 0, 1, 0, 0, 0, 1, 0, 2, 0, 1, 0, 0, 0, 1, 0, \\
&5, 0, 1, 0, 0, 0, 1, 0, 2, 0, 1, 0, 0, 0, 1, 0, 0, 0, 1, 0, 0, 0, 1, 0, 2, 0, 1, 0, 0, 0, 1, 0, \\
&0, 0, 1, 0, 0, 0, 1, 0, 3, 0, 1, 0, 0, 0, 1, 0, 0, 0, 1, 0, 0, 0, 1, 0, 3, 0, 1, 0, 0, 0, 1, 0, \\
&2, 0, 1, 0, 0, 0, 1, 0, 3, 0, 1, 0, 0, 0, 1, 0, 0, 0, 1, 0, 0, 0, 1, 0, 3, 0, 1, 0, 0, 0, 1, 0, \\
&0, 0, 1, 0, 0, 0, 1, 0, 2, 0, 1, 0, 0, 0, 1, 0, 0, 0, 1, 0, 0, 0, 1, 0, 2, 0, 1, 0, 0, 0, 1, 0, \\
&3, 0, 1, 0, 0, 0, 1, 0, 2, 0, 1, 0, 0, 0, 1, 0, 0, 0, 1, 0, 0, 0, 1, 0, 3, 0, 1, 0, 0, 0, 1, 0, \\
&3, 0, 1, 0, 0, 0, 1, 0, 3, 0, 1, 0, 0, 0, 1, 0, 0, 0, 1, 0, 0, 0, 1, 0, 2, 0, 1, 0, 0, 0, 1, 0, \\
&2, 0, 1, 0, 0, 0, 1, 0, 2, 0, 1, 0, 0, 0, 1, 0, 0, 0, 1, 0, 0, 0, 1, 0, 2, 0, 1, 0, 0, 0, 1, 0, \\
&0, 0, 1, 0, 0, 0, 1, 0, 3, 0, 1, 0, 0, 0, 1, 0, 0, 0, 1, 0, 0, 0, 1, 0, 2, 0, 1, 0, 0, 0, 1, 0, \\
&2, 0, 1, 0, 0, 0, 1, 0, 3, 0
\end{align*}
\caption{The first 490 values of the sequence $(\G(0,0,n))_{n \geq 1}$ (the bottom row of Table \ref{T1}).} \label{T.P}
\end{table}

As mentioned, the table suggests that the nim-values of the sequence $(\G(0,0,n))_{n \geq 1}$ is bounded. Our data also shows that the first 1000 values of this sequence is bounded by 12. The pattern of the nim-values may raise question: is the sequence ultimately periodic? We will show that the answer is negative. Moreover, we will show that the periodicity does not even hold for $\P / \N$ status of this sequence.

One may also ask if the positions whose nim-values are 1 in Table \ref{T.P} are periodic. We will prove that periodicity in Section \ref{S.v1}.

Set
\begin{align*}
f(n) =
\begin{cases}
0, & \text{if $(0,0,n)$ is a $\P$-position}; \\
1, & \text{otherwise}.
\end{cases}
\end{align*}


\begin{proposition} \label{P-per}
Let $n \geq 3$.
\begin{enumerate} \itemsep0em
\item [$(1)$] If $n$ is odd, $f(n) = 0$.
\item [$(2)$] If $n \bmod 2^5 \in \{4,12,16,20,28\}$, $f(n) = 0$.
\item [$(3)$] If $n \bmod 2^5 \in \{2,6,8,10, 14, 18, 22, 24, 26, 30\}$, $f(n) = 1$.
\end{enumerate}
\end{proposition}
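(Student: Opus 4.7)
The plan is to reformulate $f(n)$ in terms of the $2$-adic valuation of $n$, then verify the three assertions by elementary modular arithmetic. By Theorem~\ref{P}, for $n \geq 1$ the position $(0,0,n)$ is a $\P$-position if and only if $n = 2^{2k}(2l+1)$ for some $k, l \geq 0$; equivalently, the $2$-adic valuation $v_2(n)$ (the largest $m$ with $2^m \mid n$) is even. Thus $f(n) = 0$ iff $v_2(n)$ is even, and the proposition reduces to reading off the parity of $v_2(n)$ from $n \bmod 2^5$.

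The key elementary observation I would record is: if $r = n \bmod 2^5$ satisfies $1 \leq r \leq 31$ and $v_2(r) < 5$, then $v_2(n) = v_2(r)$. Indeed, writing $n = 32 q + r$, we have $v_2(32 q) \geq 5 > v_2(r)$, so $v_2(n) = \min(v_2(32q), v_2(r)) = v_2(r)$. Consequently, for every residue $r \in \{1, 2, \ldots, 31\}$ the parity of $v_2(n)$ is already determined by $r$ alone; only the excluded residue $r = 0$ would require looking beyond $n \bmod 32$, which is why the proposition carefully avoids it.

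Each of (1), (2), (3) then reduces to listing the valuations $v_2(r)$ of the residues in question. For (1), $n$ odd forces $v_2(n) = 0$, which is even. For (2), the residues $\{4, 12, 16, 20, 28\}$ all have even $v_2$, namely $v_2 \in \{2, 4\}$, so by the correspondence above $f(n) = 0$. For (3), the residues $\{2, 6, 8, 10, 14, 18, 22, 24, 26, 30\}$ all have odd $v_2$, namely $v_2 \in \{1, 3\}$, so $f(n) = 1$.

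There is no substantive obstacle: the entire content of the proof is the identification, via Theorem~\ref{P}, of the positions $(0, 0, n)$ that are $\P$-positions with the integers $n$ whose $2$-adic valuation is even. The remainder is a short arithmetic verification on residues mod $32$, and the choice of modulus $2^5$ in the statement is precisely what is needed so that every non-zero residue has its $v_2$ visible within the residue class.
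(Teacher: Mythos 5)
Your proof is correct and is essentially the same argument as the paper's: both reduce the claim, via Theorem \ref{P}, to determining the parity of the exponent of $2$ in $n$ from its residue mod $2^5$. Your packaging of the case-by-case factorizations (e.g. $n = 2^5m + 2^2(2l+1) = 2^2(2^3m+2l+1)$ in the paper) into the single observation that $v_2(n) = v_2(n \bmod 2^5)$ whenever the latter is below $5$ is a cleaner way to say the same thing, but it is not a different route.
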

\begin{proof}\
\begin{enumerate} \itemsep0em
\item If $n$ is odd, $n = 2l+1$ for some $l$ and so $(0,0,n)$ is a $\P$-position by Theorem \ref{P}.
\item If  $n \bmod 2^5 = 16$ or $n = 2^5m+ 16$, we rewrite $n = 2^4(2m+1)$ and so $(0,0,n)$ is a $\P$-position by Theorem \ref{P}. Set $S = \{4,12,20,28\}$. Note that each element $s \in S$ can be written as $2^2(2l+1)$. If $n \bmod 2^5 \in S$, we rewrite $n = 2^5m + 2^2(2l+1) = 2^2(2^3m+2l+1)$ and so $(0,0,n)$ is a $\P$-position by Theorem \ref{P}.
\item Set $T = \{2,6,8,10, 14, 18, 22, 24, 26, 30\}$. Note that every $t \in T$ can be written as $2^{2k+1}(2l+1)$ with $k \leq 1$. Therefore, if $n \bmod 2^5 \in T$, $n = 2^{2k+1}(2^{4-2k}m+2l+1)$. Since $4-2k > 0$, $n = 2^{2k+1}(2l'+1)$ and so $(0,0,n)$ is not a $\P$-position by Theorem \ref{P}.
\end{enumerate}
\end{proof}

We display the first 490 values of $f(n)$ as in Table \ref{T.f}, in which each row has $2^5$ values (except for the first row) and the values in the  bold column (the 10th one from the right) are corresponding to those $f(n)$ with $n \bmod 2^5 = 1$. Note that Table \ref{T.f} is obtained from Table \ref{T.P} by replacing positive values by 1.

\begin{table}[ht]
\begin{align*}
&{\bf0},0,1,0,0,0,1,0,1,0, 1,0,0,0,1,0,0,0,1,0,0,0,1,0,1,0,1,0,0,0,1,0, \\
&{\bf1},0,1,0,0,0,1,0,1,0, 1,0,0,0,1,0,0,0,1,0,0,0,1,0,1,0,1,0,0,0,1,0, \\
&{\bf0},0,1,0,0,0,1,0,1,0, 1,0,0,0,1,0,0,0,1,0,0,0,1,0,1,0,1,0,0,0,1,0, \\
&{\bf1},0,1,0,0,0,1,0,1,0, 1,0,0,0,1,0,0,0,1,0,0,0,1,0,1,0,1,0,0,0,1,0, \\
&{\bf1},0,1,0,0,0,1,0,1,0, 1,0,0,0,1,0,0,0,1,0,0,0,1,0,1,0,1,0,0,0,1,0, \\
&{\bf1},0,1,0,0,0,1,0,1,0, 1,0,0,0,1,0,0,0,1,0,0,0,1,0,1,0,1,0,0,0,1,0, \\
&{\bf0},0,1,0,0,0,1,0,1,0, 1,0,0,0,1,0,0,0,1,0,0,0,1,0,1,0,1,0,0,0,1,0, \\
&{\bf1},0,1,0,0,0,1,0,1,0, 1,0,0,0,1,0,0,0,1,0,0,0,1,0,1,0,1,0,0,0,1,0, \\
&{\bf0},0,1,0,0,0,1,0,1,0, 1,0,0,0,1,0,0,0,1,0,0,0,1,0,1,0,1,0,0,0,1,0, \\
&{\bf1},0,1,0,0,0,1,0,1,0, 1,0,0,0,1,0,0,0,1,0,0,0,1,0,1,0,1,0,0,0,1,0, \\
&{\bf0},0,1,0,0,0,1,0,1,0, 1,0,0,0,1,0,0,0,1,0,0,0,1,0,1,0,1,0,0,0,1,0, \\
&{\bf1},0,1,0,0,0,1,0,1,0, 1,0,0,0,1,0,0,0,1,0,0,0,1,0,1,0,1,0,0,0,1,0, \\
&{\bf1},0,1,0,0,0,1,0,1,0, 1,0,0,0,1,0,0,0,1,0,0,0,1,0,1,0,1,0,0,0,1,0, \\
&{\bf1},0,1,0,0,0,1,0,1,0, 1,0,0,0,1,0,0,0,1,0,0,0,1,0,1,0,1,0,0,0,1,0, \\
&{\bf0},0,1,0,0,0,1,0,1,0, 1,0,0,0,1,0,0,0,1,0,0,0,1,0,1,0,1,0,0,0,1,0, \\
&{\bf1},0,1,0,0,0,1,0,1,0.
\end{align*}
\caption{The first 490 values of the sequence $(f(n))_{n \geq 1}$} \label{T.f}
\end{table}

One may ask if the sequence $(f(n))_{n \geq 0}$ is ultimately periodic. The exceptional case not included in Proposition \ref{P-per} is $n \bmod 2^5 \equiv 0$ whose $f(n)$ values form the bold column (first column) in Table \ref{T.f}. This is the case the periodicity dies out. We now discuss this phenomenon.

\begin{theorem} \label{P-n-per}
The sequence $(f(n))_{n \geq 0}$ is not ultimately periodic.
\end{theorem}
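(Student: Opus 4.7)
The plan is to re-express $f$ in arithmetic terms using Theorem \ref{P}, and then exhibit, for any candidate period $p$, infinitely many $n$ where $f(n) \neq f(n+p)$.

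First I would translate the condition ``$(0,0,n)$ is a $\P$-position'' into a statement about the $2$-adic valuation $v_2(n)$. By Theorem \ref{P}, for $n \geq 1$, $(0,0,n)$ is a $\P$-position if and only if $n = 2^{2k}(2l+1)$ for some $k,l \geq 0$, i.e.\ if and only if $v_2(n)$ is even. Together with $f(0) = 0$, this means
\[
f(n) = \begin{cases} 0 & \text{if } n = 0 \text{ or } v_2(n) \text{ is even}, \\ 1 & \text{if } n \geq 1 \text{ and } v_2(n) \text{ is odd}. \end{cases}
\]

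Next I would argue by contradiction. Suppose $(f(n))_{n \geq 0}$ is ultimately periodic: there exist $n_0$ and $p \geq 1$ such that $f(n+p) = f(n)$ for every $n \geq n_0$. Write $p = 2^a m$ with $m$ odd, and pick an integer $b > \max(a, \log_2 n_0)$ so that $n := 2^b \geq n_0$ and $b > a$. The point is that $v_2(2^b)$ can be chosen to have either parity by varying $b$ among large integers, while $v_2(2^b + p) = v_2(2^b + 2^a m) = a$ is fixed (because $a < b$ forces the $2^a$-term to dominate the valuation). Consequently $f(2^b + p)$ depends only on the parity of $a$, whereas $f(2^b)$ depends on the parity of $b$. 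Choosing one large even $b$ and one large odd $b$ yields two values of $n = 2^b \geq n_0$ with $f(n) \neq f(n+p)$, contradicting periodicity.

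The argument is short, and no step looks like a genuine obstacle: the translation to $v_2$ is immediate from Theorem \ref{P}, and the contradiction hinges on the standard fact that adding $2^a m$ to $2^b$ with $a<b$ leaves $v_2$ equal to $a$. The only care required is to ensure $b$ can be chosen of both parities subject to $b > a$ and $2^b \geq n_0$, which is obvious. I would also remark afterwards that the proof actually shows something stronger, namely that the subsequence $(f(2^b))_{b \geq 0}$ is already $0,1,0,1,\ldots$, so no eventual period of the whole sequence can be compatible with this subsequence.
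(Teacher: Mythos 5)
Your proposal is correct. The translation of Theorem \ref{P} into the condition ``$v_2(n)$ is even'' is exactly right, the identity $v_2(2^b+2^am)=a$ for $b>a$ and $m$ odd is sound, and choosing one large even $b$ and one large odd $b$ does force $f(2^b)$ to take both values while $f(2^b+p)$ stays constant, which kills any eventual period. Your route is genuinely leaner than the paper's. The paper first uses Proposition \ref{P-per} to rule out, one residue class at a time, all values of $p \bmod 2^5$ except a handful ($0,16,24,28,30$), and only then deploys a valuation-parity argument (writing $p=2^r(2q+1)2^5$ and picking $n=2^s(2t+1)2^5$ with $s>r$ of the opposite parity) for those residual cases -- an argument that is structurally identical to yours but applied only after the mod-$2^5$ bookkeeping. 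Your observation that $v_2(n+p)=v_2(p)$ whenever $v_2(n)>v_2(p)$ makes the entire case analysis unnecessary and handles every $p$ uniformly; it also yields the sharper remark that the subsequence $(f(2^b))_b$ alternates $0,1,0,1,\dots$, which no single eventual period of the full sequence can accommodate. What the paper's longer route buys is essentially a byproduct: Proposition \ref{P-per} documents the near-periodic structure of $f$ modulo $2^5$ (the content of Table \ref{T.f}), which the author wants for expository reasons independently of the non-periodicity theorem. The only nitpick is cosmetic: you should note explicitly that $2^{b-a}+m$ is odd because $b-a\geq 1$, which is the one-line justification for $v_2(2^b+p)=a$.
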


\begin{proof}
Assume on the contradiction that $(f(n))_{n \geq 0}$ is ultimately periodic with period $p$ and preperiod $n_0$, meaning $f(n+p) = f(n)$ for all $n \geq n_0$. It follows from Proposition \ref{P-per} that $p$ is even. Otherwise, we can choose even $n$ large enough such that $(n+p) \bmod{2^5}$ belongs to the modulo set in the case (3) of Proposition \ref{P-per}, giving a contradiction.

Set $S_1 = \{4,12,16,20,28\}$, $S_2 = \{2,6,8,10, 14, 18, 22, 24, 26, 30\}$, and $S = \{2,4,6,8,10,12,14,18,20,22,26\}$.
Note that $S =  \{|s_1-s_2| \mid s_1 \in S_1, s_2 \in S_2, s_1 > s_2\}$. We next prove that $p \bmod {2^5} \notin S$. Assume on the contradiction that $p \bmod 2^5 \in S$. Without loss of generality, we can assume that $p \bmod 2^5 = s_2 - s_1$ for some $s_1 \in S_1$ and $s_2 \in S_2$. Since the sequence $(f(n))_{n \geq 0}$ is ultimately periodic, we can choose $n$ large enough such that $f(n+p) = f(n)$. Moreover, we can choose $n$ such that $n \bmod 2^5 = s_1$, giving $f(n) = 0$ by  Proposition \ref{P-per}. We now have $(n+p) \bmod 2^5 = s_2$ and so $f(n+p) = 1$ by  Proposition \ref{P-per}, giving $f(n+p) \neq f(n)$. This is a contradiction.

We next show that $p \bmod 2^5 \notin \{0, 16, 24, 28, 30\} = T$. We show that for any $t \in T$ such that $p \bmod 2^5 = t$, there exists $n \geq n_0$ such that $n \bmod 2^5 = 1$ and $f(n+p) \neq f(n)$.

Consider the case $p \bmod 2^5 = 0$. We write $p = 2^r(2q+1)2^5$. If $r$ is odd, we choose $n = 2^s(2t+1)2^5$ for some even $s > r$ such that $n \geq n_0$. This task is simply increasing $s$ until $n$ exceeds $n_0$. Then $(0,0,n)$ is not a $\P$-position by Theorem \ref{P} and so $f(n) = 1$. Note that $n+p = 2^{5+r}(2^{s-r}(2t+1)+2q+1)$. Since $5+r$ is even and $s > r$, $(0,0,n)$ is a $\P$-position by Theorem \ref{P} and so $f(n+p) = 0$. Similarly, if $r$ is even, we choose $n$ with similar form and odd $s$. The same argument gives $f(n) = 0$ and $f(n+p) = 1$. In both cases, we have $f(n+p) \neq f(n)$, contradicting to the ultimate periodicity of $(f(n))_{n \geq 0}$.

Similarly, one can argue that $p \bmod 2^5 \neq 16, 24, 28, 30$. Thus, $p \bmod 2^5 \notin \{0,1,2, \ldots, 31\}$. This is a contradiction. Therefore, the sequence $(f(n))_{n \geq 0}$ is not ultimately periodic.
\end{proof}

\begin{remark}
If the bottom row of Table \ref{T1} is ultimately periodic, $(f(n))_{n \geq 1}$ is ultimately periodic.
It follows from Theorem \ref{P-n-per} that the bottom row (and similarly, the main diagonal) of Table \ref{T1} is not ultimately periodic.
Based on this phenomenon, it is also reasonable to conjecture that
\end{remark}

\begin{conjecture}
Rows of Table \ref{T1} are not ultimately periodic.
\end{conjecture}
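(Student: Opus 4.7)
The natural plan is to port the argument used for the bottom row (Theorem \ref{P-n-per}) to every row of Table \ref{T1}. The engine of that argument was twofold: a closed-form characterization of $\P$-positions modulo $2^5$ in which all but one residue class had a constant $\P/\N$ status (Proposition \ref{P-per}), and a self-similar recurrence of the pattern at scale $4$ on the single exceptional class (multiples of $2^5$). The first step is to hunt, for each row $a \geq 1$, for a fixed nim-value $v = v(a)$ whose indicator $f_{a,v}(n) = \mathbf{1}[\G(0,a,n) = v]$ admits an analogous modulo-$2^{r(a)}$ classification with exactly one exceptional residue class on which the pattern self-replicates at a finer scale.

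For $v = 1$, the formula for $1$-positions developed in Section \ref{S.v1} should deliver precisely such a classification on many (and plausibly all) rows. Once this is in hand the contradiction proceeds exactly as in Theorem \ref{P-n-per}: assume ultimate periodicity of row $a$ with period $p$ and preperiod $n_0$, observe that $f_{a,1}$ must then also be ultimately periodic with period $p$, split into cases on $p \bmod 2^{r(a)}$, and, for each non-exceptional case, find $n \geq n_0$ with $n \bmod 2^{r(a)}$ in a class forcing $f_{a,1}(n) = 0$ while $(n+p) \bmod 2^{r(a)}$ lies in a class forcing $f_{a,1}(n+p) = 1$ (or the reverse). For the exceptional class, use the scale-$4$ self-similarity to choose the $2$-adic valuation of $n$ so that $n$ and $n+p$ receive opposite status, mirroring the argument at the end of the proof of Theorem \ref{P-n-per}.

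The principal obstruction is that no closed form for $\G(0,a,n)$ is available in general; even the boundedness of rows is only conjectural (Conjecture \ref{C.bound}). The reduction to an indicator of a single nim-value is therefore the critical step: it collapses the periodicity question into a tractable $2$-adic question, but only works if the chosen value $v$ actually appears in row $a$ with the required self-similar density. Rows where no such $v$ is available, or rows to which Section \ref{S.v1} does not directly apply, will need a separate combinatorial handle, perhaps via an analog of Theorem \ref{thm.ab} that recasts $\G(0,a,n)$ for large $n$ in terms of smaller instances. The hardest step will be ruling out periods $p$ whose $2$-adic valuation is large: there one must exploit the scale-$4$ self-similarity not just at depth one (as in Theorem \ref{P-n-per}) but at depth proportional to the $2$-adic valuation of $p$, which demands that the indicator recurrence hold uniformly across all depths of refinement.
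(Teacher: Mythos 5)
This statement is a \emph{conjecture} in the paper, not a theorem: the author proves non-periodicity only for the bottom row (Theorem \ref{P-n-per}, via the $\P$-position formula) and offers merely a heuristic for the other rows, namely that computing row $a$ requires data from rows $c<a$. So there is no proof in the paper to compare against, and your proposal must stand on its own as a proof attempt. It does not.

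The concrete gap is your choice of instrument. Your plan hinges on finding, for each row $a$, a nim-value $v$ whose indicator $f_{a,v}$ has a ``one exceptional residue class with scale-$4$ self-similarity'' structure, and you nominate $v=1$ via Section \ref{S.v1}. But Theorem \ref{v1} shows the $1$-positions are exactly $\{(0,0,4k+2),(0,4k+2,4k+2),(0,2,4k+1),(0,4l-1,4l+1)\}$: on every fixed row this set is either finite or an arithmetic progression mod $4$, and the paper's own corollary records that the $1$-positions on each relevant row are \emph{periodic}. So $f_{a,1}$ is ultimately periodic for every $a$ and can never contradict an assumed periodicity of row $a$. Likewise $v=0$ is useless for $a\geq 1$, since by Theorem \ref{P} the $\P$-positions live only on the bottom row and main diagonal, making $f_{a,0}$ eventually constant there. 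The non-periodicity in Theorem \ref{P-n-per} comes entirely from the unbounded $2$-adic stratification $2^{2k}(2l+1)$ in the $\P$-position formula; neither the $0$-positions nor the $1$-positions carry any such structure on rows $a\geq 1$, and the paper provides no formula for any $g\geq 2$ that does. Your own closing caveats essentially concede this, but the proposal as written points at the one tool (Section \ref{S.v1}) that provably cannot do the job. A genuine proof would need a new closed form, for some nim-value on each row, exhibiting unbounded $2$-adic depth --- which is exactly what is missing and why the statement remains a conjecture.
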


The reason for the conjecture is that computing sequence $(\G(0,a,n))_{n \geq a}$ partially requires data from sequences $(\G(0,c,n))_{n \geq c}$ for $c < a$. This is because from any position $(0,a,n)$ for $n \geq 2a$, there exists a move to some position $(0,c,m)$, by moving $a-c$ tokens from the pile of size $n$ to the pile of size $1$.

We now discuss the periodicity of $\P$-positions. We say that $g$-positions in the $a^{th}$ row of Table \ref{T1} is ultimately periodic if there exist $n_0$ and $p$ such that for all $n \geq n_0$, $(0,a,n)$ is a $g$-position if and only if $(0,a,n+p)$ is a $g$-position. It follows from Theorem \ref{P-n-per} that

\begin{corollary} \label{P-notperiodic}
The distribution of $\P$-positions on the bottom row are not periodic.
\end{corollary}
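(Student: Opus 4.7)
The plan is to observe that this corollary is an immediate reformulation of Theorem \ref{P-n-per}. The function $f(n)$ introduced just before that theorem is, by its very definition, the characteristic function of the $\N$-positions among the positions $(0,0,n)$: we have $f(n) = 0$ precisely when $(0,0,n)$ is a $\P$-position, and $f(n) = 1$ otherwise. Consequently the set $\Pi = \{n \geq 1 \mid (0,0,n) \text{ is a } \P\text{-position}\}$ and the binary sequence $(f(n))_{n \geq 1}$ determine each other.

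The argument is then by contraposition. Assume for contradiction that the $\P$-positions on the bottom row are ultimately periodic in the sense defined just before the corollary, i.e., there exist $n_0$ and $p > 0$ such that for every $n \geq n_0$, $(0,0,n)$ is a $\P$-position if and only if $(0,0,n+p)$ is a $\P$-position. Equivalently, $n \in \Pi \iff n+p \in \Pi$ for all $n \geq n_0$, so $f(n+p) = f(n)$ for all $n \geq n_0$. Thus $(f(n))_{n \geq 1}$ is ultimately periodic, directly contradicting Theorem \ref{P-n-per}.

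There is essentially no obstacle; the only thing to be careful about is making the equivalence between the $g$-position terminology (for $g = 0$) and the sequence $f$ completely explicit, so that the bottom row statement of Theorem \ref{P-n-per} transfers to the distribution of $\P$-positions without ambiguity.
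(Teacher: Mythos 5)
Your proposal is correct and matches the paper exactly: the paper also derives this corollary immediately from Theorem \ref{P-n-per}, using the fact that $f$ is by definition the indicator of non-$\P$-positions among $(0,0,n)$, so ultimate periodicity of the $\P$-position distribution on the bottom row would force ultimate periodicity of $(f(n))$. Nothing further is needed.
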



\section{A formula for 1-positions} \label{S.v1}

We prove a formula for $1$-positions.

\begin{theorem} \label{v1}
The $1$-positions form the set
\[\B = \{(0,0,4k+2),(0,4k+2,4k+2), (0,2,4k+1), (0,4l-1,4l+1) | k \geq 0, l \geq 1\}.\]
\end{theorem}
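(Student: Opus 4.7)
I will mimic the template of the proof of Theorem \ref{P}: verify three properties from which, by strong induction on the largest pile $b$, the equality $\{p : \G(p) = 1\} = \B$ follows. The properties are (a) $\B \cap \P = \emptyset$ and every $p \in \B$ admits a move to $\P$; (b) no move from any $p \in \B$ ends in $\B$; (c) every $p = (0,a,b) \notin \P \cup \B$ admits a move to $\B$. A short case-check through the three move types shows every legal move strictly decreases $b$, so the induction is well-founded. Once (a)--(c) are in place, positions in $\B$ have $0$ among their option values by (a) and, through the inductive hypothesis applied via (b), no option of nim-value $1$, forcing $\G(p) = 1$; positions outside $\P \cup \B$ have an option of nim-value $1$ by (c) and induction, forcing $\G(p) \neq 1$.

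\textbf{Symmetry reduction.} By Corollary \ref{C.iso} the palindrome involution $(0,a,b) \mapsto (0,b-a,b)$ is a game isomorphism, and it permutes $\B$: it swaps $(0,0,4k+2) \leftrightarrow (0,4k+2,4k+2)$ and $(0,2,4k+1) \leftrightarrow (0,4k-1,4k+1)$ (with $(0,1,2)$ as a fixed point in the second family). So for (a) and (b) it is enough to analyse the representatives $(0,0,4k+2)$ and $(0,2,4k+1)$.

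\textbf{Properties (a) and (b).} Neither representative is a $\P$-position by Theorem \ref{P}: $4k+2 = 2^1(2k+1)$ has an odd exponent of $2$, and $(0,2,4k+1)$ is of neither $(0,0,n)$- nor $(0,n,n)$-form. Explicit moves to $\P$: $(0,0,4k+2) \to (0,2k+1,2k+1)$, and for $k \geq 1$ the move $(0,2,4k+1) \to (2,2,4k-1) \equiv (0,0,4k-3)$, with the boundary case $(0,1,2) \to (1,1,1) \equiv (0,0,0)$ for $k=0$; each target lies in $\P$ because its defining parameter is odd. For (b), I enumerate the legal moves from each representative and show by short modulo-$4$ arguments that the resulting reduced sorted triple cannot satisfy the defining equations of any of the four families of $\B$. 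For example, the moves from $(0,0,4k+2)$ have the form $(0,j,4k+2-j)$ with $1 \leq j \leq 2k+1$, and landing in type $D$ would require $j = 4l-1$ and $4k+2-j = 4l+1$, i.e., $4k = 8l-3$, which is infeasible modulo $4$; the parity arguments for types $A$, $B$, $C$ and for the three move types from $(0,2,4t+1)$ are analogous.

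\textbf{Property (c) and main obstacle.} Given $p = (0,a,b) \notin \P \cup \B$ I construct a move into $\B$ by cases on $a$. When $a = 0$: the combined exclusion forces $v_2(b) \geq 3$ odd, hence $b = 8l$ for some $l \geq 1$, and the type-$3$ move $(0,0,8l) \to (0,4l-1,4l+1)$ lands in type $D$. When $a = b$: symmetrically $a = 8m$, and the type-$1$ move $(0,8m,8m) \to (4m-1,4m+1,8m) \equiv (0,2,4m+1)$ lands in type $C$. For the generic range $0 < a < b$ I split on $(a \bmod 4, b \bmod 4)$ together with whether $b > 2a$ or $b \leq 2a$: a type-$3$ move equalising the gap between the top two piles to exactly $2$ gives a $D$-position when $a+b \equiv 0 \pmod 8$; a type-$2$ move with $j=a$ gives an $A$- or $B$-position according as $b > 2a$ or $b \leq 2a$ (with the appropriate residue condition on $b-2a$ or $2a-b$); otherwise a type-$1$ or type-$2$ shift tailored to the residue class produces a $C$-position. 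Each sub-case closes with a parity check that the chosen number of tokens lies in the legal range and that the resulting reduced triple solves the defining equations of the targeted family. The hard part will be this generic case of (c): arranging the residue-class analysis cleanly so that every configuration is covered, and in particular handling near-degenerate positions (small gap $b-a$, or $a$ close to $0$ or $b$) where very few legal moves exist and the choice of move type must be made carefully.
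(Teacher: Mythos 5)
Your framework coincides with the paper's: show that no move joins two positions of $\B$ and that every position outside $\P\cup\B$ has a move into $\B$, then conclude inductively. (Your extra property that every member of $\B$ has a move to $\P$ is in fact redundant once $\B\cap\P=\emptyset$ is checked: Theorem \ref{P} already gives $\G(p)\neq 0$ for $p\in\B$, which forces an option of value $0$.) Your use of the palindrome involution to cut the ``no move within $\B$'' verification down to the two representatives $(0,0,4k+2)$ and $(0,2,4k+1)$ is a genuine economy over the paper, which simply leaves that part to the reader, and your explicit moves into $\P$ are all legal and correct.

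The gap is in property (c), which is where essentially all of the paper's proof lives: an exhaustive case analysis on $a\bmod 4$ and $(b-a)\bmod 4$ (sixteen main cases, several with further parity splits), each case closing with an explicit legal move into $\B$. You treat only $a=0$ and $a=b$ and then offer a three-item menu of recipes for $0<a<b$ without verifying that every residue class is covered; as you concede, that coverage argument is the hard part, and it is not done. Moreover one recipe fails as stated: the ``type-2 move with $j=a$'', i.e.\ $(0,a,b)\to(a,a,b-a)$, is legal only when $a\le b-a$, so it is unavailable in your branch $b\le 2a$ (and at $b=2a$ it lands in $(0,0,0)\in\P$, not in $\B$). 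The clean fix, which the paper adopts, is to invoke Corollary \ref{C.iso} to restrict property (c) itself to $a\le b/2$ before the residue analysis; even so, several of the paper's cases require moves outside your menu (for instance equalizing the smallest and largest piles, or shifting $2q+2s+1$ tokens after a further parity split on $s-q$), so the full case-by-case construction cannot be avoided. (A small arithmetic slip: in your type-$D$ exclusion from $(0,0,4k+2)$ the infeasible condition is $4k+2=8l$, not $4k=8l-3$; the conclusion is unaffected.)
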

\begin{proof}
Recall that the $\P$-positions form the set $\P = \{(0,0,0), (0,0,2^{2k}(2l+1)), (0,2^{2k}(2l+1),2^{2k}(2l+1)) \mid  k \geq 0, l\geq 0\}$ (Theorem \ref{P}). It can be checked that $\P \cap \B = \emptyset$.
We need to prove the following two facts:
\begin{itemize} \itemsep0em
\item [\rm{(i)}] there is no move between any two distinct positions in $\B$, and
\item [\rm{(ii)}] from any position not in $\P \cup \B$, there exists a move that terminates in $\B$.
\end{itemize}
It is simple to check \rm{(i)} and we leave to the reader. For \rm{(ii)}, let $x = (0,a,b) \notin \P \cup \B$. We write $a = 4q+r$ and $b = a + 4s+t$. By Corollary \ref{C.iso}, it suffices to consider the case where $a \leq b/2$ or equivalently $4s+t \geq 4q+r$.

We have mentioned that in position $(a,b,c)$, we assume that $a \leq b \leq c$. Note that after a move, the hierarchy may not be the same and we need reorder the entries. We skip the process of reordering entries in a position after a move in this proof. For example, we may write ``removing $3$ token from the third entry to the first entry leads $(1,2,7)$ to $(0,2,2)$". Here, we mean $(0,2,2) \equiv (4,2,4)$ after subtracting $2$ from each entry and reordering entries by increasing order.

\item [(\rm{I})] Consider the case $t=0$.
\begin{enumerate} \itemsep0em
\item If $r = 0$, $x = (0,4q,4q+4s)$. We examine the parity of $s$.
    \begin{enumerate}
    \item If $s$ is odd or equivalently $s = 2p+1$, moving $2s$ tokens from the third pile to the second leads $x$ to $(0,4q+2s,4q+2s) = (0,4(q+p)+2,4(q+p)+2) \in B$.
    \item If $s$ is even or equivalently $s = 2p$, moving $2s-1$ tokens from the third pile to the second leads $x$ to $(0,4q+2s-1,4q+2s+1) = (0,4(q+p)-1,4(q+p)+1) \in \B$.
    \end{enumerate}
\item If $r = 1$, $x = (0,4q+1,4q+1+4s)$  with $s > q$. Moving $4q+3$ tokens from the third pile to the first leads $x$ to $(4q+3,4q+1,4s-2) \equiv (0,2,4(s-q-1)+1) \in \B$.
\item If $r = 2$, $x = (0,4q+2, 4q+2+4s)$ with $s > q$. We examine the parity for $s$.
    \begin{enumerate}
    \item If $s$ is odd or equivalently $s = 2p+1$, moving $4p+1$ tokens from the third pile to the second leads $x$ to $(0,4(q+p+1)-1,4(q+p+1)+1) \in B$.
    \item If $s$ is even or equivalently $s = 2p$, moving $2s = 4p$ tokens from the third pile to the second leads $x$ to $(0,4(q+p)+2,4(q+p)+2) \in \B$
    \end{enumerate}
\item If $r = 3$, $x = (0,4q+3,4q+3+4s)$ with $s > q$. Moving $4q+1$ tokens from the third entry to the first leads $x$ to $(4q+1, 4q+3, 4s+2) \equiv (0,2,4(s-q)+1) \in \B$.
\end{enumerate}

\item [(\rm{II})] Consider the case $t=1$.
\begin{enumerate}  \itemsep0em
\item If $r = 0$, $x = (0,4q,4q+4s+1)$ with $s \geq q$. Moving $4q-2$ tokens from the third entry to the first leads $x$ to $(4q-2,4q,4s+3)) \equiv (0,2,4(s+1-q)+1) \in \B$.
\item If $r = 1$, $x = (0,4q+1, 4q+1+4s+1)$ with $s \geq q$. We examine the parity of $s-q$.
    \begin{enumerate}
    \item If $s-q$ is odd, $s > q$. Moreover, $s-q-1 = 2p$ for some $p \geq 0$. Moving $2q+2s+1$ tokens from the third entry to the first leads $x$ to $(2q+2s+1,4q+1,2q+2s+1) = (2(s-q-1)+2,0,2(s-q-1)+2) = (0,4p+2,4p+2) \in \B$.
    \item If $s-q$ is even, $s-q = 2p$ for some $p \geq 0$. Moving $2q+2s$ tokens from the third entry to the first leads $x$ to $(2q+2s,4q+1,2q+2s+2) \equiv (2(s-q)-1,0,2(s-q)+1) = (0,4p-1,4p+1) \in \B$.
    \end{enumerate}
\item If $r = 2$, $x = (0,4q+2,4q+2+4s+1)$ with $s > q$. Moving $4q+4$ tokens from the third entry to the first leads $x$ to $(4q+4,4q+2,4s-1) \equiv (0,2,4(s-1-q)+1) \in \B$.
\item If $r = 3$, $x = (0,4q+3,4q+3+4s+1)$ with $s > q$. Moving $4q+3$ tokens from the third entry to the first leads $x$ to $(4q+3,4q+3,4s+1) \equiv (0,0,4(s-q-1)+2) \in \B$.
\end{enumerate}

\item [(\rm{III})] Consider the case $t = 2$. 
\begin{enumerate}  \itemsep0em
\item If $r = 0$, $x = (0,4q,4q+4s+2)$ with $s \geq q$. Moving $4q$ tokens from the third entry to the first leads $x$ to $(4q,4q,4s+2) \equiv (0,0,4(s-q)+2) \in \B$.
\item If $r = 1$, $x = (0,4q+1,4q+1+4s+2)$ with $s \geq q$. Moving $4q-1$ tokens from the third entry to the first leads $x$ to $(4q-1,4q+1,4(s+1)) \equiv (0,2,4(s-q+1)+1) \in \B$.
\item If $r = 2$, $x = (0,4q+2,4q+2+4s+2)$. If $s = q$, moving $4q+1$ tokens from the third entry to the first leads $x$ to $(4q+1,4q+2,4s+3) \equiv (0,1,2) \in B$. We now consider the case $s > q$ and examine $s-q$.
    \begin{enumerate}
    \item If $s-q = 2p$ for some $p$. Moving $2q+2s+1$ tokens from the third entry to the first leads $x$ to $(2q+2s+1, 4q+2,2q+2s+3) \equiv (0, 2(s-q)-1, 2(s-q)+1) = (0,4p-1,4p+1) \in \B$.
    \item If $s-q = 2p+1$, moving $2q+2s+2$ tokens from the third entry to the first leads $x$ to $(2q+2s+2,4q+2,2q+2s+2) \equiv (0,4p+2,4p+2) \in \B$.
    \end{enumerate}
\item If $r = 3$, $x = (0,4q+3,4q+3+4s+2)$ with $s > q$. If $s$ is even or equivalently $s = 2p$, moving $2s = 4p$ tokens from the third entry to the second leads $x$ to $(0,4(q+p)+3,4(q+p)+5) \equiv (0,4(q+p+1)-1, 4(q+p+1)+1) \in \B$. If $s$ is odd or $s = 2p+1$, moving $2s +1 = 4p+3$ tokens from the third entry to the second leads $x$ to $(0,4(q+p+1)+2,4(q+p+1)+2) \in \B$.
\end{enumerate}

\item [(\rm{IV})] Consider the case $t=3$.
\begin{enumerate}  \itemsep0em
\item If $r = 0$, $x = (0,4q,4q+4s+3)$ with $s \geq q$. Moving $4q+2$ tokens from the third entry to the first leads $x$ to $(4q+2,4q,4s+1) \equiv (0,2,4(s-q)+1) \in \B$.
\item If $r = 1$, $x = (0,4q+1,4q+1+4s+3)$ with $s \geq q$. Moving $4q+1$ tokens from the third entry to the first leads $x$ to $(4q+1,4q+1,4s+3) \equiv (0,0,4(s-q)+2) \in \B$.
\item If $r = 2$, $x = (0,4q+2,4q+2+4s+3)$ with $s \geq q$. Moving $4q$ tokens from the third entry to the first leads $x$ to $(4q,4q+2,4s+5) \equiv (0,2,4(s-q+1)+1) \in \B$.
\item If $r = 3$, $x = (0,4q+3,4q+3+4s+3)$ with $s \geq q$.
    \begin{enumerate}   \itemsep0em
    \item Consider the case $s = q$.
    Moving $4q+2$ tokens from the third entry to the first leads $x$ to $(4q+2,4q+3,4q+4) \equiv (0,1,2) \in \B$.
    \item Consider the case $s > q$.
    \begin{enumerate}
    \item If $s-q$ is odd, or $s-q = 2p+1$ for some $p$. Moving $2q+2s+3$ tokens from the third entry to the first leads $x$ to $(2q+2s+3, 4q+3, 2q+2s+3) \equiv (0,4p+2,4p+2) \in \B$.
    \item If $s-q$ is even, or $s-q = 2p$ for some $p$. Moving $2q+2s+2$ tokens from the third entry to the first leads $x$ to $(2q+2s+2, 4q+3, 2q+2s+4) \equiv (2(s-q)-1,0,2(s-q)+1) = (0, 4p-1,4p+1) \in \B$.
    \end{enumerate}
    \end{enumerate}
\end{enumerate}
\end{proof}

As mentioned in Section \ref{S.P.per}, the positions whose nim-values are 1 appearing in Table \ref{T.P} are periodic, corresponding to those having formula $(0,0,4k+2)$. Moreover, by Theorem \ref{v1}, we have the following periodicity of 1-positions on the bottom and the third one from the bottom

\begin{corollary}
The 1-positions distributed on the bottom row $($reps.~main diagonal$)$ and the third row $($reps.~third diagonal from the top diagonal$)$ from the bottom of Table \ref{T1} are periodic.
\end{corollary}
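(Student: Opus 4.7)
The plan is to read the corollary directly off Theorem \ref{v1}, which gives the complete list of $1$-positions as the union $\B$ of four explicit families. For each of the four lines mentioned in the corollary -- the bottom row $(0,0,n)$, the main diagonal $(0,n,n)$, the third row from the bottom $(0,2,n)$, and the third diagonal from the top $(0,a,a+2)$ -- I would intersect $\B$ with that line and read off the periodicity of the surviving parameter.

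For the bottom row, only the family $(0,0,4k+2)$ has its first two coordinates equal to $0$; the other three families force either $a\geq 2$ or the two largest piles equal and positive. Hence the $1$-positions on the bottom row are precisely those with $n\equiv 2\pmod 4$, giving a periodic distribution of period $4$. The main diagonal is handled symmetrically: only $(0,4k+2,4k+2)$ survives among the four families, and again yields $n\equiv 2\pmod 4$.

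For the third row $(0,2,n)$, the family $(0,2,4k+1)$ supplies the infinite set of $n$ with $n\equiv 1\pmod 4$ and $n\geq 5$, while $(0,4k+2,4k+2)$ at $k=0$ contributes the isolated point $n=2$; the remaining two families are incompatible with $a=2$ (they force $a=0$ or $a=4l-1\geq 3$). Thus the $1$-positions on this row form the set $\{2\}\cup\{5,9,13,\dots\}$, which is ultimately periodic with period $4$. Similarly, on the third diagonal $(0,a,a+2)$, the family $(0,4l-1,4l+1)$, being exactly the sub-family of $\B$ with $b-a=2$, supplies $a\equiv 3\pmod 4$ with $a\geq 3$, and $(0,0,4k+2)$ at $k=0$ gives the isolated point $a=0$; so the $1$-positions on this diagonal form $\{0\}\cup\{3,7,11,\dots\}$, ultimately periodic with period $4$.

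There is no substantive obstacle here beyond the routine book-keeping of checking which of the four families in $\B$ can meet each line and with what values of the parameter. The period $4$ arises uniformly because in each relevant family the defining parameter $k$ (or $l$) advances the free coordinate by exactly $4$.
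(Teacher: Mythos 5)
Your proposal is correct and matches the paper's treatment: the paper offers no separate argument for this corollary but simply reads it off the classification of $1$-positions in Theorem \ref{v1}, which is exactly the case-by-case intersection you carry out (including the correct handling of the isolated points $(0,2,2)$ and $(0,0,2)$ and the exclusion of $(0,2,1)\equiv(0,1,2)$ from the row $a=2$). Your observation that the third row and third diagonal are only \emph{ultimately} periodic because of those isolated points is a fair and slightly more precise reading than the paper's bare statement.
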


\section{An estimation of the distribution of nim-values} \label{S.Dis}

In this section, we will discuss the bound of the distribution of $g$-positions for each $g$.

We have shown that the $\P$-positions appear only on the bottom row (and the main diagonal) of Table \ref{T1}.  By Theorem \ref{v1}, 1-positions are distributed on only bottom row and the third row (from the bottom), except for the position $(0,1,2)$. Let we ignore diagonals as they are identical to rows. Although 1-positions are periodic on each row, the $\P$-positions are not periodic (Corollary \ref{P-notperiodic}). We haven't known if the periodicity will hold for $k$-positions on each row (for $k \geq 2$).

Our computation shows that for each nim-value $g \geq 2$, $g$-positions are distributed on the bottom $2g$ rows or less of Table \ref{T1} (or top $2g$ diagonals).

 \begin{conjecture}
 For $a \leq b/2$, if $\G(0,a,b) = g \geq 2$ then $a \leq 2g-1$.
 \end{conjecture}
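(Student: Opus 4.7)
The plan is to prove the conjecture by strong induction on $g$, establishing the stronger bound $\G(0,a,b) \geq g+1$ whenever $a \geq 2g$ and $a \leq b/2$. The base cases $g = 0$ and $g = 1$ are handled by Theorems \ref{P} and \ref{v1}: in the restricted region $a \leq b/2$, only the row $a = 0$ contains $\P$-positions, and only the rows $a = 0$ and $a = 2$ contain $1$-positions. For the inductive step, I will assume that for every $g' < g$ the analogous statement holds, so that every $g'$-position with $a' \leq b'/2$ lies in some row $a' < 2g \leq a$.

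The substance of the proof is to exhibit, from $(0, a, b)$ with $a \geq 2g$ and $a \leq b/2$, a move to some position with Sprague--Grundy value $g'$ for each $g' \in \{0, 1, \dots, g\}$; by the mex definition this forces $\G(0,a,b) \geq g+1$. Using the three move types listed in Section \ref{S.int} together with Lemma \ref{Lem.sim}, the standardized one-move successors of $(0, a, b)$ are $(0, a - 2k, b - k)$ for $1 \leq k \leq a/2$, together with $(0, a - k, b - 2k)$ or $(0, k - a, b - k - a)$ from the two subcases of the second type, and $(0, a + k, b - k)$ from the third type. Only the first three decrease the middle coordinate, and for each target row $j < a$ they produce a short explicit list of candidate columns $b'$. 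Since by induction every $g'$-position with $g' < g$ sits in some row $j < a$, the plan is to check, for each such $g'$, that at least one of those candidates coincides with a $g'$-position in that row.

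The main obstacle is this matching step. The columns reachable in a given low row $j$ form a short arithmetic progression determined by $a$, $b$ and $j$, whereas the nim-sequences in those low rows are only conjecturally bounded (Conjecture \ref{C.bound}) and have a complicated, non-periodic structure (Theorem \ref{P-n-per}). I expect the subcases $g' \in \{0,1\}$ to follow from a modular case analysis in the style of Proposition \ref{P-per}, using the explicit descriptions of Theorems \ref{P} and \ref{v1}. The genuinely hard step is the case $g' = g$, for which the inductive hypothesis gives no information about where $g$-positions live; my intended workaround is to bypass this requirement by arguing instead that \emph{some} nim-value $g' < g$ must fail to appear among the successors of $(0, a, b)$ whenever $a \geq 2g$, forcing the mex to be strictly less than $g$ and again contradicting $\G(0,a,b) = g$. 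Either route ties the conjecture to Conjecture \ref{C.bound}, since boundedness of the low rows is precisely what would provide enough room either to locate a reachable $g$-position or to exhibit such a gap.
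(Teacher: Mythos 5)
The statement you are addressing is presented in the paper as a conjecture: the paper gives no proof, only computational evidence (the lists of $2$- and $3$-positions for $b\le 300$ and the observed bounds on $a$ for $g\le 10$). So there is no proof of the paper's to compare against; the only question is whether your argument settles the problem, and it does not — by your own account the central step is missing, and it is conditioned on Conjecture~\ref{C.bound}, which is itself open.

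Two concrete problems. First, the strengthened bound you propose to prove by induction, namely $\G(0,a,b)\ge g+1$ whenever $a\ge 2g$ and $a\le b/2$, is false already at the base case $g=1$: the position $(0,2,5)$ satisfies $a=2=2g$ and $a\le b/2$, yet $\G(0,2,5)=1$ by Theorem~\ref{v1} (it has the form $(0,2,4k+1)$). The conjecture is restricted to $g\ge 2$ precisely because $1$-positions occupy row $a=2$; an induction whose base case fails cannot start, and what Theorems~\ref{P} and~\ref{v1} actually give you is only the weaker location statement (all $g'$-positions with $g'\le 1$ and $a'\le b'/2$ lie in rows $a'\le 2$), which is the form of hypothesis you should carry. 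Second, and more seriously, the inductive step is a description of what an argument would need to do rather than an argument. From $(0,a,b)$ there are at most three standardized successors in any fixed row $j<a$ (with columns $b-(a-j)/2$, $b-2(a-j)$, and $b-j-2a$), hence only about $6g$ candidates in the rows $0,\dots,2g-1$ where the induction places all $g'$-positions with $g'<g$. To conclude you must either show that every value $0,\dots,g$ is realized among these candidates (blocked at $g'=g$, where the induction says nothing) or that some value $g'<g$ is missed (which would contradict $\operatorname{mex}=g$). Both routes require knowing \emph{which} columns of a fixed low row carry a given nim-value, and beyond Theorems~\ref{P} and~\ref{v1} that distribution is unknown and, by Theorem~\ref{P-n-per}, not even ultimately periodic for $g'=0$. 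Conjecture~\ref{C.bound} would not fill this hole: boundedness of a row says nothing about whether your three specific candidate columns in that row hit or avoid a prescribed value. The missing ingredient is a structural description of $g$-positions within a fixed low row, and neither the paper nor your sketch provides one.
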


We give here examples with $g = 2$ and $g = 3$. For $g = 2$, 2-positions $(0,a,b)$  has $a \leq 3$ (We have tested for $a \leq 14$) with following first 300 values for $b$:
\begin{enumerate} \itemsep0em
\item [] $a = 0$: 32, 72, 104, 120, 128, 136, 200, 216, 232, 248, 288.
\item [] $a = 1$: 3, 13, 17, 18, 23, 29, 33, 38, 39, 40, 46, 48, 53, 55, 57, 61, 62, 68, 70, 80, 85, 86, 87, 91, 95, 96, 100, 101, 102, 109, 110, 116, 117, 118, 124, 126, 128, 132, 136, 144, 145, 150, 151, 159, 165, 166, 167, 174, 176, 181, 182, 187, 189, 195, 196, 197, 204, 206, 208, 212, 213, 214, 222, 223, 224, 228, 229, 238, 240, 244, 256, 263, 278, 279, 284, 285, 294, 295, 296.
\item [] $a = 2$: 4, 6, 7, 11, 26, 27, 46, 71, 78, 80, 110, 136, 151, 159, 160, 174, 182, 190, 232, 247, 248, 256, 263, 264, 271, 272, 279, 288.
\item [] $a = 3$: 23, 67, 147, 243, 259, 275
\end{enumerate}

For $g = 3$, 3-positions $(0,a,b)$ has $a \leq 3$ (We have tested for $a \leq 14$) with following first 300 values for $b$:
\begin{enumerate} \itemsep0em
\item [] $a = 0$: 8, 40, 88, 96, 160, 168, 184, 264, 280, 296.
\item [] $a = 1$: 4, 5, 21, 25, 27, 31, 37, 45, 47, 54, 69, 76, 77, 78, 83, 84, 89, 111, 115, 125, 130, 131, 141, 146, 147, 156, 160, 161, 173, 192, 193, 198, 199, 207, 216, 221, 227, 237, 242, 253, 257, 259, 267, 269, 274, 289, 299.
\item [] $a = 2$: 18, 19, 22, 31, 35, 48, 55, 62, 63, 70, 95, 102, 103, 104, 118, 119, 123, 134, 138, 139, 147, 150, 154, 167, 168, 170, 183, 184, 187, 199, 207, 214, 216, 224, 231, 235, 246, 250, 251, 260, 267, 275, 282, 283, 287, 292.
\item [] $a = 3$: 7, 10, 36, 53, 60, 68, 89, 92, 107, 108, 124, 179, 184, 204, 221, 229, 236, 247, 279, 292.
\end{enumerate}

Note that the estimation of $a$ with $a \leq 2g-1$ seems to be overestimation. It seems that when $g$ increases, $a$ gets closed to $g$ than to $2g-1$. For example, up to $b = 200$, we have following bound for $a$:
\begin{enumerate} \itemsep0em
\item [] $g = 4$, $a \leq 6$;
\item [] $g = 5$, $a \leq 6$;
\item [] $g = 6$, $a \leq 7$;
\item [] $g = 7$, $a \leq 9$;
\item [] $g = 8$, $a \leq 12$;
\item [] $g = 9$, $a \leq 11$;
\item [] $g = 10$, $a \leq 13$.
\end{enumerate}


\section{Discussion}
We have analyzed 3-pile {\sc Sharing Nim} and shown that this game can be solved in quadratic time. Especially, the strategy of this game can be understood easily by first year students at high school. We have also obtained a formula for 1-positions. We have discovered the case in which finite nim-sequences form palindromes when reading horizontally or vertically. Recall that palindromes is not unusual for two-pile Nim-like games when we read nim-values on the line that orthogonal to the main diagonal, consisting of entries $(e_{a-i,i})$ for given $a$ and $0 \leq i \leq a$. Such an example is Wythoff'g game \cite{Wyt}.

We have conjectured that the nim-sequence $(\G(0,a,n))_{n \geq a}$ is bounded, presenting the possibility of a surprising pattern of nim-sequences of variants of {\sc Nim}: nim-values are bounded regardless of the increase of a single pile. Regardless of the tendency of being bounded, it seems that these nim-sequences are not ultimately periodic, with the case $a = 0$ being verified.

A possible research direction is to study the game with more than 3 piles. It can be seen that results in Section \ref{S.palindrome} still holds for {\sc Sharing Nim} with more than 3 piles, using Figure \ref{F.com}.

Another research direction is to playing {\sc Sharing Nim} on graphs. The idea is as follow. On a direct graph, we place on each vertex a number of tokens. Each move consists of choosing one vertex $x$ and moving an arbitrary number of tokens from that vertex to an adjacent vertex $y$  provided that vertex $y$ does not have more tokens than $x$ after the move. Let we call this game {\sc Graph Nim}. From this view, playing $n$-pile {\sc Sharing Nim} is similar to playing {\sc Graph Nim} on a complete graph $K_n$.


\bibliographystyle{amsplain}

\bigskip
\bigskip

{\bf Appendix. Data on the bounded nim-sequences} \label{App}

\begin{figure}[ht]
\begin{center}
\includegraphics[height=5.5cm]{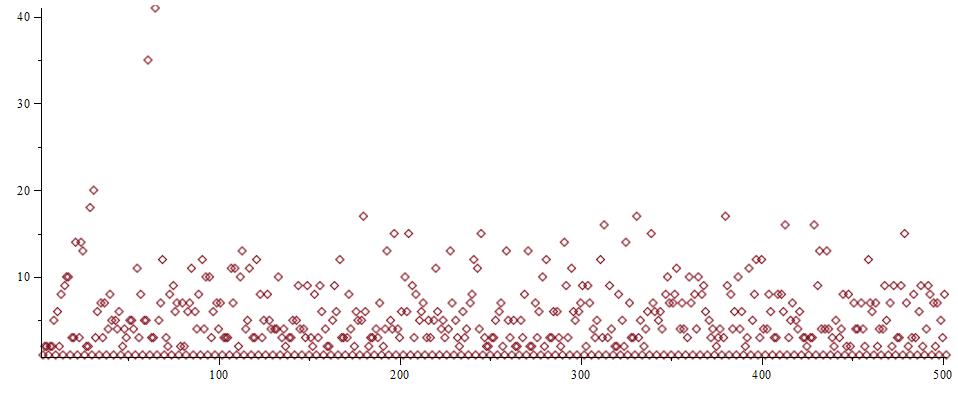}
\caption{The first 500 nim-values of the sequence $(\G(1,3,n))_{n \geq 3}$} \label{row3}
\end{center}
\end{figure}

\begin{figure}[ht]
\begin{center}
\includegraphics[height=5.5cm]{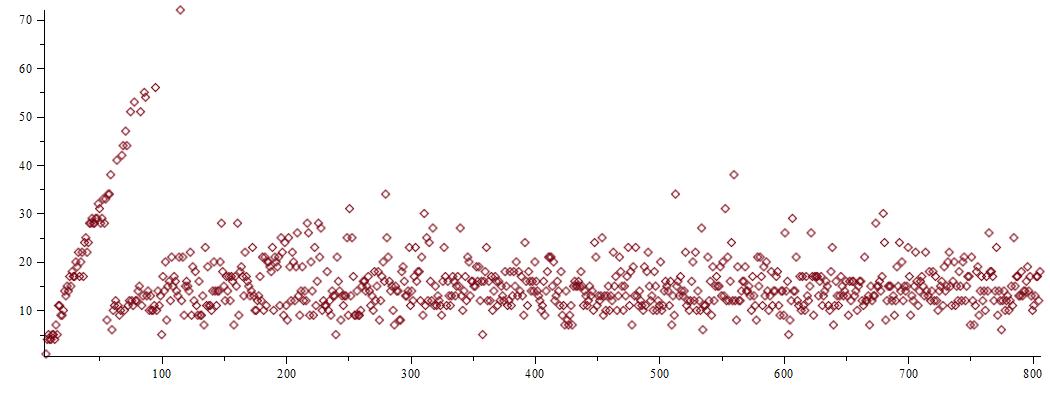}
\caption{The first 800 nim-values of the sequence $(\G(1,7,n))_{n \geq 7}$} \label{row7}
\end{center}
\end{figure}

\end{document}